\long\def\@savemarbox#1#2{\global\setbox#1\vtop{\hsize\marginparwidth 
  \@parboxrestore\tiny\raggedright #2}}
   \def\MR#1{}  }
\newcommand{\ZZ}{\mathbb{Z}}
\newcommand{\from}{\colon\thinspace}
\newtheorem{theorem}{Theorem}[section]
\newtheorem{proposition}[theorem]{Proposition}
\newtheorem{lemma}[theorem]{Lemma}
\newtheorem{corollary}[theorem]{Corollary}
\theoremstyle{definition}
\newtheorem{definition}[theorem]{Definition}
\newtheorem{example}[theorem]{Example}
\newtheorem{remark}[theorem]{Remark}
\newcommand{\refthm}[1]{Theorem~\ref{Thm:#1}}
\newcommand{\reflem}[1]{Lemma~\ref{Lem:#1}}
\newcommand{\refprop}[1]{Proposition~\ref{Prop:#1}}
\newcommand{\refcor}[1]{Corollary~\ref{Cor:#1}}
\newcommand{\refdef}[1]{Definition~\ref{Def:#1}}
\newcommand{\refsec}[1]{Section~\ref{Sec:#1}}
\newcommand{\reffig}[1]{Figure~\ref{Fig:#1}}
\newcommand{\refexamp}[1]{Example~\ref{Example:#1}}
\title{State graphs and fibered state surfaces}
\author{Darlan Gir\~ao}
\author{Jessica Purcell}
\begin{document}

\maketitle

\begin{abstract}
  Associated to every state surface for a knot or link is a state graph, which embeds as a spine of the state surface. A state graph can be decomposed along cut-vertices into graphs with induced planar embeddings. Associated with each such planar graph is a checkerboard surface, and each state surface is a fiber if and only if all of its associated checkerboard surfaces are fibers. We give an algebraic condition that characterizes which checkerboard surfaces are fibers directly from their state graphs. We use this to classify fibering of checkerboard surfaces for several families of planar graphs, including those associated with 2-bridge links. This characterizes fibering for many families of state surfaces. 
\end{abstract}

\section{Introduction}

Associated to a diagram of a knot or link in the 3-sphere are several embedded spanning surfaces called \emph{state surfaces}. These arise from a choice of Kauffman state, which were introduced by Kauffman to give insight into the Jones polynomial~\cite{Kauffman}. The associated surfaces, introduced in~\cite{Ozawa} and~\cite{PPS}, arise naturally as ribbon graphs. State surfaces are known to be related to knot and link polynomials; see for example~\cite{DFKLS:JP, FKP:Guts, KalfagianniLee:Degree}. State surfaces are also related to the hyperbolic geometry of the link complement for hyperbolic links; see for example \cite{FKP:Hyperbolic, FKP:Quasifuchsian, FKP:Guts, BMPW, BurtonKalfagianni, FinlinsonPurcell}. Because of their connections with geometry, topology, and link invariants, these surfaces have recently become objects of much study in knot theory. In this paper, we consider when such surfaces can be fibers. 

If we put restrictions on the Kauffman states, there are already results that determine if a state surface is a fiber. 
For a class of states called homogeneously adequate states, work of Futer~\cite{Futer:Fiber} and Futer, Kalfagianni, and Purcell~\cite{FKP:Guts} shows that the associated state surface is a fiber if and only if a related graph, called the reduced state graph, is a tree. This answered a question of Ozawa~\cite{Ozawa}. In \cite{FKP:Guts}, it is shown that for some special states, fibering is also indicated immediately by the colored Jones polynomial. For other restrictions of state surfaces, the problem of fibering such state surfaces has been considered by Gir\~ao~\cite{Girao:Augmented} and Gir\~ao, Nogueira, and Salgueiro~\cite{GNS:FiberSurfaces}. However, we wish to find ways of classifying fibrations of \emph{all} state surfaces, without restrictions on the Kaufmann state.

The study of fibered knots and links has been ongoing since the early 1960s, introduced by work of Neuwirth~\cite{Neuwirth:Thesis} and Stallings~\cite{Stallings1962}, followed by results of Murasugi~\cite{Murasugi}. In the 1970s, further work of Stallings~\cite{Stallings} and Harer~\cite{Harer} gave insight on constructing fibered knots and links. In the 1980s, Gabai~\cite{Gabai:Fiber} gave a procedure to decide whether an oriented link is fibered, and this has had numerous applications. To date, several results exist that prove that a given knot is fibered. For example, work of Ghiggini~\cite{Ghiggini} and Ni~\cite{Ni:KFHDetectsFiber} shows that whether or not a knot is fibered can be determined using knot Floer homology. However, we are interested not in whether there exists some fibering of a knot or link, but whether a particular fixed state surface is a fiber. This is not always easily determined even if it is known that a link is fibered; see the discussion below on 2-bridge knots and links.

There do exist examples of knots and links and fixed surfaces for which fibering can be determined immediately from a diagram. These include standard diagrams of pretzel links, due to Gabai~\cite{Gabai:Fiber}, and Montesinos knots, due to Hirasawa and Murasugi~\cite{HirasawaMurasugi}. Previous results on fibering of state surfaces for restricted states are also along these lines~\cite{Futer:Fiber, FKP:Guts, Girao:Augmented, GNS:FiberSurfaces}. These are the types of results that we wish to generalize.

The first main result of this paper is \refthm{Main}, which says that whether a state surface is a fiber is completely determined by an associated graph $G_\sigma$, called the state graph, which can be quickly read from the diagram. In particular, we reduce the problem of determining fibering for a state surface with a complicated embedding into $S^3$ to the problem of determining fibering for checkerboard state surfaces. 

The next main result is \refthm{Algebraic}, which gives an algebraic condition that characterizes when a checkerboard state surface is a fiber. In particular, it suffices to prove that a homomorphism $\phi\from \pi_1(G_\sigma)\to\pi_1(S^3-G_\sigma)$ is an isomorphism, where the map $\phi$ is read directly from $G_\sigma$; see \refprop{StallingsStar}. Stallings gave a method to determine whether a homomorphism of free groups is an isomorphism, now called the Stallings folding algorithm~\cite{StallingsFold}. This has had many applications, especially in geometric group theory. By work of Touikan, the algorithm produced runs in almost linear time~\cite{Touikan}. We show here that our state graph homomorphisms feed neatly into to the Stallings folding algorithm, and thus we can read whether or not a state surface is a fiber from its state graph in almost linear time. 

We conclude the paper by giving several applications, determining many new examples of families of knots and links with fibered state surfaces. 
One additional result that comes out of this work, which may be of independent interest, is a classification of those 2-bridge link diagrams for which the (bounded) checkerboard surface is a fiber. Recall that a 2-bridge link is determined by a rational number $p/q$, and the link has several natural diagrams, determined by different continued fraction expansions of $p/q = [a_{n-1}, \dots, a_1]$ (notation described in \refsec{Applications}). Denote the diagram by $K[a_{n-1}, \dots, a_1]$. Hatcher and Thurston note that a 2-bridge knot is fibered if and only if it is isotopic to $K[\pm 2, \dots, \pm 2]$, and the fiber is isotopic to a particular spanning surface~\cite{HatcherThurston}. However, the surfaces they describe do not ever have the form of a checkerboard surface. Here, we determine exactly which values of $a_{n-1}, \dots, a_1$ give a checkerboard surface that is a fiber, i.e.\ isotopic to one of Hatcher and Thurston's fibered surfaces.

\subsection{Organization}
In \refsec{Prelim}, we review terminology used in this paper. In particular, we define Kauffman states, state surfaces, and state graphs. With terminology defined, we then state carefully our first result, \refthm{Main}. In \refsec{Murasugi}, we review important theorems on fibering due to Gabai and others, and give the proof of \refthm{Main}. In \refsec{Stallings}, the statement and proof of our algebraic characterization of fibering is given, after recalling work of Stallings. Applications are then presented in \refsec{Applications}. This section includes, for example, a complete characterization of when the bounded checkerboard surfaces of 2-bridge links are fibered, \refthm{2Bridge}.

\subsection{Acknowledgements}
We thank Jo{\~a}o Nogueira for helpful conversations, and we thank David Futer for bringing our attention to Stallings folds. 
The first author was partially supported by  Conselho Nacional de Desenvolvimento Científico e Tecnológico (CNPq) grants 446307/2014--9 and 306322/2015--3 and the CAPES Foundation. The second author was partially supported by the Australian Research Council.

\subsection{Dedication}
While this work was underway, the first author, Darlan Gir\~ao, was diagnosed with aggressive cancer, and he died before the paper was complete. Darlan was a wonderful colleague and friend, and he will be missed. This paper is dedicated to his memory.

\section{Preliminaries and main results}\label{Sec:Prelim}
Given a diagram $D(K)$ of a link $K$, at each crossing there are two choices of \emph{resolutions} for the smoothing of the crossing: an $A$-resolution or a $B$-resolution; see \reffig{Resolutions}.  By choosing a resolution at each crossing, we construct a collection of circles, called \emph{state circles}, which are the boundaries of disjoint disks, called \emph{state disks}. These state circles induce a decomposition of the plane into connected components that we call \textit{regions}. Connect the circles by edges labeled $A$ or $B$, to indicate a resolved crossing and its resolution, as in \reffig{Resolutions}.

\begin{figure}
  \import{figures/}{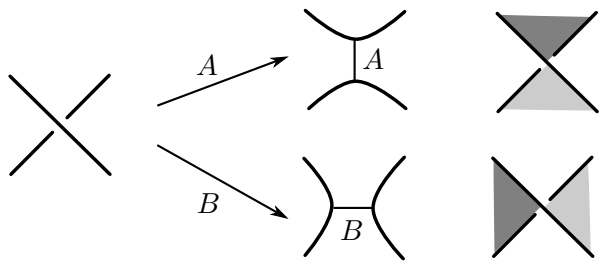}
\caption{The two choices of resolutions for the split of a crossing.}
\label{Fig:Resolutions}
\end{figure}

A \emph{Kauffman state} $\sigma$ of a link diagram $D(K)$ is a choice of resolution at each crossing of $D(K)$. Given a state $\sigma$, add twisted bands connecting the state disks, one for each crossing, according to the choice of resolution. We thus obtain a surface whose boundary is the link $K$. The resulting surface $S_\sigma$ is called the \emph{state surface} of $\sigma$. For example, the Seifert surface of an oriented diagram of a link is a particular case of a state surface, where the resolution of each crossing is defined by the orientation of the link components.

The \emph{state graph} $G_{\sigma}$ has one vertex for each state disk and one edge for each band defined by the state $\sigma$. We label the edges by $A$ or $B$ according to the resolution of the respective crossings. Note that the graph $G_\sigma$ has a natural embedding in the surface $S_\sigma$ as its spine. \reffig{Fig8Example} shows an example of a diagram, a Kauffman state, and the corresponding state graph with its associated embedding.

\begin{figure}
  \import{figures/}{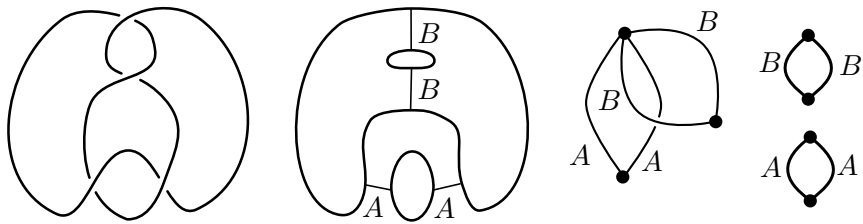}
\caption{Left to right: A diagram, a Kauffman state, the corresponding state graph with its associated embedding, and the decomposition along cut-vertices into planar graphs.}
\label{Fig:Fig8Example}
\end{figure}

\begin{definition}\label{Def:CutVertex}
We say that a vertex $v$ \emph{decomposes} a graph $G$ into components $G_1, G_2$ if $G_1$ and $G_2$ contain at least one edge, and $G=G_1\cup G_2$ and $G_1\cap G_2=\{v\}$. We also say $v$ is a \emph{cut-vertex} of $G$. 
\end{definition}

\begin{lemma}\label{Lem:CutVertices}
The cut-vertices of a state graph decompose it into a finite collection of planar 2-connected graphs, with planar embeddings determined by the Kauffman state.
\end{lemma}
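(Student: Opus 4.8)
The plan is to combine one topological observation with one standard graph-theoretic fact. The observation is that $G_\sigma$ embeds in the plane of the diagram with a rotation system that can be read directly from $\sigma$; the fact is that a finite connected graph decomposes along its cut-vertices into blocks, each of which is $2$-connected (apart from the degenerate case of a single edge). Granting the planar embedding, every block is a subgraph of a planar graph and so inherits a planar embedding determined by $\sigma$, which is exactly the assertion of \reflem{CutVertices}. The substantive work is thus to construct the planar embedding of $G_\sigma$ and to match the block decomposition to \refdef{CutVertex}.

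To produce the embedding I would place a vertex at the center of each state disk and draw each edge as follows. The band associated with a crossing $c$ lies in a small neighborhood of $c$ and is attached to its two state disks along two boundary arcs (one on each disk, or two on a single disk). Draw the corresponding edge of $G_\sigma$ as the concatenation of a radial arc from the center of the first disk to its attaching arc, a short arc across the band region at $c$, and a radial arc into the second disk. Distinct crossings occur at distinct points of the plane, so the band regions are disjoint; and the attaching arcs of the bands incident to a fixed state disk are disjoint sub-arcs of its boundary circle, so the radial arcs inside that disk can be chosen pairwise disjoint except at the central vertex. Hence the edges meet only at shared vertices, and $G_\sigma$ embeds in the plane. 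Moreover the cyclic order of the edges around each vertex is precisely the order in which the corresponding crossings appear along the state circle, so the rotation system, and therefore the embedding, is determined by $\sigma$.

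For the decomposition I would, after reducing to the case of connected $G_\sigma$, repeatedly apply \refdef{CutVertex}: while a cut-vertex exists, split $G$ as $G_1 \cup G_2$. Since $G_\sigma$ has one edge per crossing and each split sends at least one edge into each factor, every split strictly decreases the edge count of each factor, so the process terminates after finitely many steps in pieces with no cut-vertex. These are the blocks of $G_\sigma$: each is $2$-connected, apart from the degenerate block consisting of a single edge. Every block is a subgraph of $G_\sigma$, hence planar, and it inherits the embedding constructed above, so its planar embedding is again determined by $\sigma$, as claimed.

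The main obstacle is the planarity step, namely verifying that the arcs routed through the state disks and band regions can be made simultaneously disjoint; the key point is that the bands incident to a common state disk attach along disjoint arcs appearing in an order fixed by $\sigma$. The block decomposition itself is standard graph theory, and once planarity is established the remaining assertions (finiteness of the collection, planarity of each block, and the state-determined embedding) are immediate.
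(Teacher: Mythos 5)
Your graph-theoretic half (block decomposition, termination, blocks of a plane graph inheriting plane embeddings) is standard and fine, but the topological half has a genuine gap: the planar embedding you construct for all of $G_\sigma$ does not exist in general. You claim simultaneously that your drawing is an embedding and that ``the cyclic order of the edges around each vertex is precisely the order in which the corresponding crossings appear along the state circle.'' These two claims are incompatible exactly when a state circle has parts of the diagram nested on both sides of it --- which is exactly when cut-vertices arise, and is the case the lemma is really about. Concretely, take a state circle $C$ with a circle $C_2$ inside it joined to $C$ by bands near the north and south points of $C$, and a circle $C_1$ outside it joined to $C$ by bands near the east and west points (this is realized by the state in \reffig{Fig8Example}). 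The state-determined cyclic order at the vertex $v$ for $C$ is then interleaved: (north, east, south, west), alternating between edges to $C_2$ and edges to $C_1$. But the underlying graph is two bigons sharing the vertex $v$, and in any planar embedding the two edges of each bigon must be consecutive in the rotation at $v$: the bigon through $v$ and $C_2$ is a simple closed curve, and germs on opposite sides of it at $v$ lie in different complementary regions, so both edges to $C_1$ must leave $v$ on the side containing $C_1$. Your construction breaks at the corresponding step: the claim that ``the radial arcs inside that disk can be chosen pairwise disjoint'' tacitly assumes the interior of each state disk is disjoint from the rest of the diagram, whereas here the arc from $v$ to the west attaching point is trapped on the wrong side of the north and south bands. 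Nor is this a fixable drawing issue: with the interleaved rotation the ribbon surface is the genus-one Seifert surface of the figure-eight knot, not a planar (checkerboard) surface, so no embedding of the whole $G_\sigma$ realizes the state-determined spine structure. Indeed the paper flags precisely this immediately after the lemma: every state graph is abstractly planar, but the embedding induced by the state surface need not be, and that embedding is the one \refthm{Main} needs.

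This is why the paper's proof runs in the opposite order from yours: decompose first, embed second. It takes an \emph{innermost} state circle $C$ with diagram on both sides (such a circle gives a cut-vertex), splits $G_\sigma$ there into inside and outside pieces, handles the outside piece by induction on the number of cut-vertices, and observes that for the inside piece every state circle now bounds a disk disjoint from the diagram --- including $C$ itself, which bounds the complementary disk $S^2 - D$ on the other side. Only then does it collapse these pairwise disjoint, diagram-free disks to points, which is where an argument like your radial-arc construction is actually valid and yields a planar embedding determined by the state. So your proof can be repaired by moving the embedding step after the innermost-circle decomposition, but as written the central planarity claim for the undecomposed $G_\sigma$ is false.
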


This decomposition is illustrated for the example of \reffig{Fig8Example} on the far right of that figure.

It is not hard to see that every state graph admits a planar embedding. However, this embedding may not be isotopic to the embedding associated with the state surface, and so it is not obviously useful from the point of view of analyzing spanning surfaces. The point of \reflem{CutVertices} is that after decomposing along cut-vertices, the remaining pieces all have a planar embedding induced from the state surface. 

\begin{proof}[Proof of \reflem{CutVertices}]
Suppose each state circle bounds a disk that is disjoint from the diagram of the link; for example this will hold if the state graph has no cut-vertices. The Kauffman state connects these disks by edges labeled $A$ or $B$ that lie in the plane, and are disjoint from the interiors of the disks. Thus when we collapse each disk to a point, the state graph we obtain remains planar.

So suppose there is a state circle such that disks on both sides meet the diagram. Then this corresponds to a cut-vertex. There must be an innermost such state circle $C$. That is, $C$ bounds a disk $D$ in the plane $S^2$ such that both $D$ and $S^2-D$ meet the diagram, but all state circles within $D$ bound disks disjoint from the diagram. Decompose the state graph along the corresponding cut-vertex. This decomposes the state graph into a graph associated with the outside of $D$ and a graph associated with the inside of $D$. On the outside of $D$, there are at most $n-1$ cut-vertices, and so the result follows by induction. On the inside of $D$, all state circles bound disks disjoint from the diagram. But now the state circle $C$ also bounds a disk $D'=S^2-D$ disjoint from the remainder of the diagram. By the previous argument, the graph we obtain by collapsing all these disks to points is planar, with embedding determined by the Kauffman state. 
\end{proof}

As in \reffig{Fig8Example}, the planar graphs determined by \reflem{CutVertices} may have multiple edges connecting the same pair of vertices, and these edges may be labeled both $A$ and $B$. We say that two edges are \emph{parallel within the plane} if the edges are ambient isotopic in the plane in the complement of all other edges and vertices of the graph. 

For multiple edges connecting the same pair of vertices that have the same label ($A$ or $B$) and that are parallel within the plane, remove all but one such edge. When all such edges are removed, the resulting graph is called the \emph{reduced state graph}. Note that multiple edges may remain in the reduced state graph, but they will either have distinct labels, or they will not be parallel. \reffig{ReducedGraph} illustrates this by example. 

\begin{figure}
  \import{figures/}{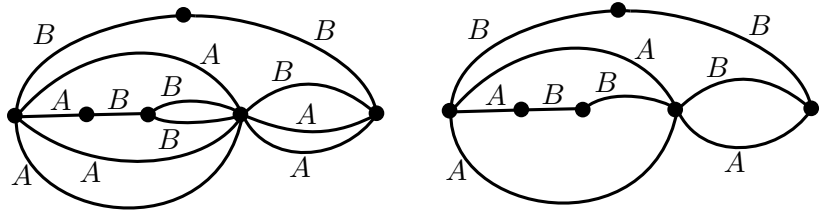}
  \caption{On the left is a 2-connected state graph with a fixed planar embedding. On the right is the associated reduced state graph.}
  \label{Fig:ReducedGraph}
\end{figure}

Any planar graph with edges labeled $A$ and $B$ corresponds to a \emph{checkerboard surface} as follows. For each vertex, take a disk. For each edge, take a twisted band, with direction of twisting determined by the label $A$ or $B$; see \reffig{Checkerboard}.

\begin{figure}
  \import{figures/}{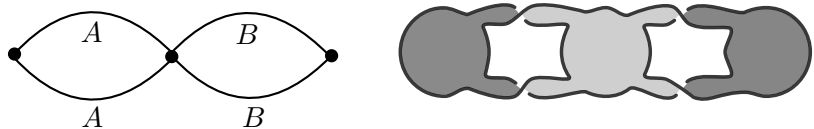}
  \caption{A planar graph with edges labeled $A$ and $B$ corresponds to a checkerboard surface.}
  \label{Fig:Checkerboard}
\end{figure}

In addition to multiple edges, we may also reduce our planar state graphs by considering consecutive edges.

\begin{definition}
Two edges are called \textit{consecutive} if they share a vertex $v$ and there are no other edges incident at $v$.
\end{definition}

\begin{lemma}\label{Lem:Consecutive}
If two consecutive edges in a reduced state graph do not have the same label, then the corresponding link and state surface can be isotoped to a simpler link, removing two crossings. On the graph, this corresponds to collapsing the two edges.
\end{lemma}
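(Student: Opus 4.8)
The plan is to reduce to a local normal form at the shared vertex. Since the two edges are consecutive, the corresponding state disk $D_v$ has exactly two bands $B_1$ and $B_2$ attached to it, running to the state disks $D_u$ and $D_w$ of the two neighboring vertices (where possibly $u=w$). The boundary circle $\partial D_v$ is cut into arcs by the two regions where $B_1$ and $B_2$ attach, and the two complementary arcs, not covered by bands, are subarcs of $\partial S_\sigma = K$. Translating back to the diagram, $D_v$ is a bigon face bounded by the two crossings $c_1, c_2$ that produced $B_1$ and $B_2$, and the two free arcs of $\partial D_v$ are the two strands of $K$ running between $c_1$ and $c_2$ along the two sides of the bigon. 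First I would record this normal form, so that the entire configuration sits in a ball meeting the rest of the diagram only in the strand-ends emanating toward $D_u$ and $D_w$.

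Next I would perform the simplification directly on the surface. By \reffig{Resolutions} and \reffig{Checkerboard}, each band is half-twisted, and the handedness of the half-twist is determined by the label: an $A$-band and a $B$-band twist in opposite senses. Hence, viewing $B_1 \cup D_v \cup B_2$ as a single band joining $D_u$ to $D_w$, the two half-twists are measured with opposite signs along this band and cancel, so the composite band is untwisted. Equivalently, rotating $D_v$ by a half-turn removes the half-twist of $B_1$ while transferring a half-twist to $B_2$; because the labels differ, this transferred twist cancels the existing twist of $B_2$ rather than doubling it. The resulting configuration is a chain of disks joined by untwisted bands lying in a ball, which is ambiently isotopic to a single disk. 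Carrying $K=\partial S_\sigma$ along, this isotopy deletes the bigon and removes the crossings $c_1, c_2$ (a Reidemeister~II move in the generic case), and on the state graph it identifies $u,v,w$ to a single vertex and deletes the two edges, i.e.\ collapses the two consecutive edges.

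The step I expect to be the main obstacle is the handedness bookkeeping in the second paragraph: one must verify that distinct labels genuinely produce half-twists of opposite sign along the composite band, so that they cancel, and that identical labels would instead produce a full twist, which is essential and cannot be removed. This second case is a useful consistency check, since it explains why the hypothesis that the labels differ is necessary. I would make this precise by tracking the two free boundary arcs of $D_v$ through the bands using the explicit twisting conventions of \reffig{Resolutions}, confirming that in the opposite-label case the boundary closes up into the two parallel strands of a trivial bigon. Two remaining routine points are that the move is supported in a ball, so no new crossings are introduced and exactly two are removed, and the degenerate case $u=w$, where the same cancellation shows the two oppositely twisted bands compose to an untwisted configuration, merging $D_v$ into its single neighbor and again removing both crossings.
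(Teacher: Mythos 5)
Your argument in the main case ($u\neq w$) is essentially the paper's own proof written out in words: the paper's entire proof is the single picture in \reffig{Collapse}, which depicts exactly the local move you describe --- the middle disk carries only the two bands, the $A$-band and $B$-band are half-twisted with opposite handedness by the construction of the checkerboard surface, and a half-turn of $D_v$ cancels the two half-twists, so the two crossings disappear by a Reidemeister~II move and the graph operation is the collapse of the two edges. Your normal-form setup, the handedness bookkeeping, and the observation that equal labels would instead compose to an essential full twist are all correct and faithful to what the figure shows.

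The one place you go beyond the figure, the degenerate case $u=w$, is the one place your argument is wrong, and it is not the routine point you present it as. When $u=w$, the union $D_u\cup B_1\cup D_v\cup B_2$ has Euler characteristic zero: after the opposite half-twists cancel it is a flat, untwisted annulus, whose boundary is a two-component unlink. It is not isotopic to a disk, so $D_v$ cannot be ``merged into its single neighbor,'' and the proposed graph move (identifying $u$ and $v$ and deleting both edges, leaving a single vertex) would replace this annulus by a disk. That substitution changes the answer the lemma exists to compute: the untwisted annulus is \emph{not} a fiber --- this is precisely case (2) of \reflem{MultipleEdges} --- whereas the disk is a fiber for the unknot. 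So the case $u=w$ must be excluded from the statement rather than folded into it. Note that this exclusion is harmless in context: after the cut-vertex decomposition of \reflem{CutVertices} the pieces have no cut-vertices, so two consecutive edges with $u=w$ can only occur when the entire piece is that 2-cycle, i.e.\ a pair of parallel edges with distinct labels, which \refthm{Main} handles via \reflem{MultipleEdges}(2) and not via the present lemma.
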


\begin{proof}
\begin{figure}
\import{figures/}{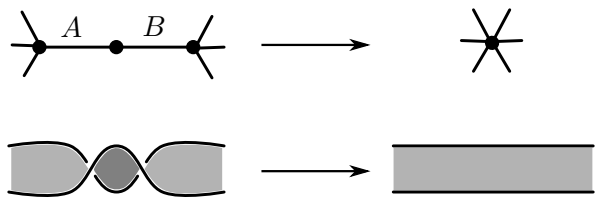}
\caption{Collapsing consecutive edges with distinct labels.}
\label{Fig:Collapse}
\end{figure}
The simplification is shown in figure~\ref{Fig:Collapse}.
\end{proof}

We can now state our first main theorem.

\begin{theorem}\label{Thm:Main}
Let $K$ be a diagram of a link in $S^3$ with oriented state surface $S_\sigma$ associated to the state $\sigma$. Decompose the corresponding state graph $G_\sigma$ along cut-vertices into planar graphs, and consider the reduced planar graph, with parallel edges with the same label removed. Finally, collapse consecutive edges with distinct labels. Let $\{G_1, \dots, G_n\}$ denote the resulting collection of planar graphs. The surface $S_\sigma$ is a fiber if and only if for each $G_i$, the associated checkerboard surface is a fiber.
\end{theorem}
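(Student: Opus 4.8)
The plan is to reduce \refthm{Main} to Gabai's theorem that a Murasugi sum (generalized plumbing) of Seifert surfaces is a fiber surface if and only if each summand is a fiber surface~\cite{Gabai:Fiber}. Since $S_\sigma$ is oriented with boundary $K$, it is a Seifert surface, and I would interpret each operation in the statement as either a Murasugi sum decomposition or a fibering-preserving modification. Concretely, I would establish three local facts: (1) decomposing $G_\sigma$ along a cut-vertex exhibits $S_\sigma$ as a Murasugi sum of the two surfaces carried by the two pieces; (2) removing a parallel edge with the same label corresponds to deplumbing a Hopf band; and (3) collapsing consecutive edges with distinct labels is an ambient isotopy. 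Granting these, one reduces $S_\sigma$ by (2) and (3) to a surface that, by (1), is an iterated Murasugi sum of the checkerboard surfaces of the reduced $2$-connected planar pieces $G_1,\dots,G_n$, and Gabai's theorem then yields the claimed equivalence.

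For (1), I would use the geometric picture from the proof of \reflem{CutVertices}. A cut-vertex is a state disk $D_v$ bounded by a state circle $C$ meeting the diagram on both sides. Capping $C$ yields a sphere $\Sigma$; the surface carried by the inside piece lies in one ball bounded by $\Sigma$, the surface carried by the outside piece lies in the other, and the two meet exactly in $D_v$. The bands of the two pieces attach to $D_v$ along disjoint families of arcs of $C$, so $D_v$ is a $2n$-gon meeting the two summands in alternating sides, which is precisely the data of a Murasugi sum along $D_v$. Since each $2$-connected planar piece, with the planar embedding furnished by \reflem{CutVertices}, is by construction the checkerboard surface of that piece, iterating over all cut-vertices writes $S_\sigma$ as an iterated Murasugi sum of these checkerboard surfaces.

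For (2), the local model of $k\ge 2$ parallel bands with the same half-twist between two disks, with bigon faces between consecutive bands, is a full-twisted annulus (a Hopf band) together with a single remaining band; more precisely, passing from $k$ to $k-1$ parallel same-label bands is a deplumbing of a Hopf band. As the Hopf band is a fiber surface, Gabai's theorem shows this operation preserves fibering, so the checkerboard surface of the reduced graph is a fiber exactly when the unreduced surface is. For (3), \reflem{Consecutive} already realizes the collapse of consecutive edges with distinct labels by an ambient isotopy of the pair $(S^3, S_\sigma)$ removing two crossings, and isotopic Seifert surfaces are simultaneously fibers or not.

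I expect the main obstacle to be step (1): verifying carefully that the shared state disk $D_v$ satisfies the precise hypotheses of a Murasugi sum — that its boundary is a $2n$-gon with sides lying alternately on the two summands and that the summands genuinely lie in opposite balls bounded by $\Sigma$ — rather than merely sharing a disk. A secondary technical point is the sign bookkeeping in (2): one must check that equal labels force equal-sign half-twists, so that the deplumbed annulus is a genuine Hopf band, and that orientability is preserved throughout, which is where the hypothesis that $S_\sigma$ is oriented is used. Once these local models are pinned down, the global statement follows by induction on the number of cut-vertices and of removable edges, applying Gabai's theorem at each stage.
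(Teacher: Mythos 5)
Your three local facts are precisely the paper's own ingredients: your (1) is \reflem{Murasugi} (a cut-vertex exhibits $S_\sigma$ as a Murasugi sum of the two pieces), your (2) is part (1) of \reflem{MultipleEdges} (removing a parallel same-label edge is a Hopf-band deplumbing), your (3) is \reflem{Consecutive}, and the gluing is Gabai's theorem, \refthm{Gabai}. So the approach is the paper's. However, the order in which you chain the moves is backwards, and this creates a genuine gap: you propose to reduce $S_\sigma$ by (2) and (3) first and then decompose along cut-vertices, claiming the resulting pieces are the checkerboard surfaces of $G_1,\dots,G_n$. That claim fails in general, because \emph{parallel} and \emph{consecutive} are properties of an edge pair relative to the whole embedded graph, and the cut-vertex decomposition creates new instances of both. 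For a concrete failure, let $G_\sigma$ consist of a bigon on vertices $u,v$ with both edges labeled $A$, together with a pendant edge from $u$ to a vertex placed inside the bigon and another pendant edge from $u$ to a vertex placed outside it. The two bigon edges are not parallel in $G_\sigma$ (each complementary region of the bigon contains a pendant edge, obstructing the isotopy), and they are consecutive at $v$ but carry the same label, so (2) and (3) do nothing to $S_\sigma$; only after decomposing at the cut-vertex $u$ does the bigon piece acquire parallel edges, which must then be reduced to reach the theorem's $G_1$. The same phenomenon occurs for consecutive distinct-label edges, which typically become consecutive only after decomposition and reduction; this is exactly the warning in the Remark following \refthm{Main}, that the isotopy of \reflem{Consecutive} does not extend across a cut-vertex or across parallel edges.

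The repair is immediate with the tools you already have, and it is what the paper does: first apply (1) inductively over all cut-vertices (this is \refprop{MainNotReduced}), so that $S_\sigma$ is a fiber if and only if the checkerboard surface of each unreduced planar piece is; then, \emph{within each piece}, apply (2) to delete parallel same-label edges, and finally (3) to collapse consecutive distinct-label edges of the reduced piece. Since each move either changes the surface by a Murasugi sum with a fibered summand or is an ambient isotopy, each preserves the property of being a fiber in both directions, and chaining them in this order lands exactly on the collection $\{G_1,\dots,G_n\}$ in the statement.
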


Note that a spanning surface for a link in $S^3$ can only be a fiber if it is orientable; thus we restrict to orientable surfaces in \refthm{Main}. Observe from the definitions that a state surface will be orientable if and only if its state graph is bipartite, so we restrict to bipartite state graphs. 

\begin{remark}
By \reflem{Consecutive}, to determine if a state surface is a fiber, it suffices to collapse consecutive edges with distinct labels. But this is done in \refthm{Main} after the state graph has been decomposed along cut-vertices and reduced. The decomposition and reduction may create new consecutive edges with distinct labels, which can then be collapsed. However, the isotopy of \reflem{Consecutive} does not extend in general across a cut-vertex, or across multiple crossings corresponding to parallel edges. 
\end{remark}

\begin{corollary}\label{Cor:SameDecomp}
Let $K_1, K_2$ be diagrams of links in $S^3$ and let $S_1,S_2$ be oriented state surfaces associated to the states $\sigma_1, \sigma_2$ of the  diagrams, respectively. Decompose the corresponding state graphs $G_1$, $G_2$ along cut-vertices into planar graphs, and for each planar graph component, take the associated reduced graph and remove consecutive edges with distinct labels. Suppose that the reduced planar graphs coming from $G_1$ can be matched in one-to-one correspondence with the reduced planar graphs coming from $G_2$, by ambient isotopy within the plane via an isotopy preserving labels $A$ and $B$. Then $S_1$ is a fiber if and only if $S_2$ is. \qed
\end{corollary}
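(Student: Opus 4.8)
The plan is to deduce this directly from \refthm{Main}, together with the observation that the checkerboard surface associated to a labeled planar graph depends, up to ambient isotopy in $S^3$, only on the planar isotopy class of the graph rel its $A/B$ labels.

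First I would apply \refthm{Main} to each diagram separately. This replaces the statement ``$S_1$ is a fiber $\iff$ $S_2$ is a fiber'' by the statement ``every checkerboard surface associated to a component of the reduced decomposition of $G_1$ is a fiber $\iff$ the same holds for $G_2$.'' Thus it suffices to compare the two collections of checkerboard surfaces produced by the reduction procedure, and the problem is reduced to understanding how the checkerboard construction behaves under the hypothesized correspondence.

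Next I would verify that the checkerboard-surface construction of \reffig{Checkerboard} is natural with respect to label-preserving plane isotopies: if two reduced planar graphs are carried to one another by an ambient isotopy of the plane that preserves the labels $A$ and $B$, then their associated checkerboard surfaces are ambient isotopic in $S^3$. Indeed, each such surface is assembled from a disk at every vertex and an $A$- or $B$-twisted band at every edge, with the embedding of the band determined by the position of the edge in the plane and by its label. A label-preserving plane isotopy takes disks to disks and bands to bands of the same twist type, and extends to an ambient isotopy of $S^3$ carrying one surface to the other. Since being a fiber is invariant under ambient isotopy of a surface in $S^3$, each matched checkerboard surface coming from $G_1$ is a fiber exactly when its partner coming from $G_2$ is.

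Finally I would combine these two steps. Using the one-to-one correspondence supplied by the hypothesis, all checkerboard surfaces arising from $G_1$ are fibers precisely when all those arising from $G_2$ are; \refthm{Main} then yields that $S_1$ is a fiber if and only if $S_2$ is. The only point demanding genuine care, and the closest thing to an obstacle, is the naturality asserted in the second step: one must confirm that the band construction depends only on the labeled planar isotopy class, and in particular that the extension of the planar isotopy to $S^3$ preserves the framing and direction of twisting encoded by the $A/B$ labels rather than altering them. Once this is checked, the corollary is immediate.
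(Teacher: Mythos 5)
Your proposal is correct and is exactly the argument the paper intends: the corollary is stated with an immediate \qed because it follows directly from \refthm{Main} once one observes that the checkerboard surface of \reffig{Checkerboard} depends only on the label-preserving planar isotopy class of the reduced graph. Your second step, verifying that naturality, is the right (and only) point needing care, and your handling of it is sound.
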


We illustrate \refcor{SameDecomp} by an example. In figures~\ref{Fig:Fig8Example} and~\ref{Fig:HopfExample} we have two distinct links with given resolutions, each with a single cut-vertex. When we decompose, for each link we obtain two graphs that are cycles with two edges having the same label. The reduced graphs of the decomposition are then a pair of single edges, one labeled $A$ and one labeled $B$. Thus the hypotheses of \refcor{SameDecomp} are satisfied, and one is fibered if and only if the other is fibered. In this case, both examples are well-known to be fibered. Notice, however, that the links and the state surfaces are completely different (not homeomorphic).

\begin{figure}
  \import{figures/}{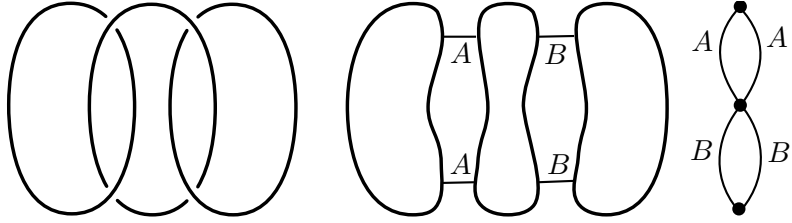}
\caption{Left to right: A diagram, a Kauffman state $\sigma$ determines $H_\sigma$, and the corresponding state graph. Note the state graph is isomorphic to that in figure~\ref{Fig:Fig8Example}.}
\label{Fig:HopfExample}
\end{figure}

A Kauffman state is said to be \emph{homogeneous} if all resolutions of the diagram in each region are the same, and \emph{adequate} if there are no 1-edge loops. In \cite{Futer:Fiber} and \cite{FKP:Guts}, it was shown that a state surface associated with a homogeneous, adequate state is a fiber if and only if the reduced state graph is a tree. We obtain the following corollary.

\begin{corollary}\label{Cor:Tree}
Suppose $K$ is a link in $S^3$ with oriented state surface $S_\sigma$, and suppose the associated reduced state graph $G_\sigma$ is a tree. Then $S_\sigma$ is a fiber. 
\end{corollary}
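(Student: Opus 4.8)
The plan is to derive the corollary directly from \refthm{Main}, which reduces the question of whether $S_\sigma$ fibers to a statement about the checkerboard surfaces of the planar pieces $G_1,\dots,G_n$ obtained from $G_\sigma$ by decomposing along cut-vertices, reducing parallel same-label edges, and collapsing consecutive edges with distinct labels. Since the reduced state graph is assumed to be a tree, the heart of the argument is to show that each $G_i$ is a single edge, and then to observe that the checkerboard surface of a single edge is a disk, which is a fiber.

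First I would analyze the decomposition when the reduced state graph is a tree. A tree contains no cycles, and its blocks (maximal subgraphs without a cut-vertex) are exactly its individual edges. I would argue that the same conclusion holds for the collection $\{G_i\}$ produced in \refthm{Main}: each $G_i$ is a single edge. The one point requiring care is that \refthm{Main} decomposes the full state graph $G_\sigma$ along cut-vertices \emph{before} reducing, whereas the hypothesis concerns the reduced graph. To reconcile these, I would note that the reduction operation only deletes parallel edges between a common pair of vertices, and any such pair lies in a single block; hence reducing the full graph agrees with reducing each block, and the reduced state graph is the union of the reduced blocks glued along cut-vertices. Consequently any cycle surviving inside some reduced $G_i$ would descend to a cycle in the reduced state graph, contradicting the tree hypothesis. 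Since each $G_i$ is $2$-connected by \reflem{CutVertices} and now contains no cycle, it must be a single edge; the subsequent collapse of consecutive distinct-label edges does nothing, as each piece already has a single edge.

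Next I would identify the checkerboard surface of a single edge. By the construction preceding \refthm{Main}, it consists of two disks joined by one twisted band. An Euler characteristic count gives $\chi = 1$ with a single boundary component, so this surface is a disk, embedded in $S^3$; its boundary is therefore an unknot. A spanning disk of the unknot is a fiber of the unknot complement (the solid torus fibers over $S^1$ with meridian-disk fiber). Thus the checkerboard surface associated with each $G_i$ is a fiber, and \refthm{Main} yields that $S_\sigma$ is a fiber.

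I expect the main obstacle to be the bookkeeping in the second step: carefully justifying that no cycle can be hidden among the pieces $G_i$, i.e.\ that decomposition and reduction interact as claimed and that the final collapse cannot reintroduce cyclic structure. Once each $G_i$ is pinned down as a single edge, the remaining topology (a disk is a fiber) is routine. It is worth emphasizing that this corollary strengthens the earlier Futer--Kalfagianni--Purcell criterion by removing the homogeneous and adequate hypotheses: the tree condition alone on the reduced state graph suffices, precisely because \refthm{Main} handles the embedding into $S^3$ for us.
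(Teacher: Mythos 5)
Your proof is correct, and its skeleton matches the paper's: both apply \refthm{Main} and reduce to showing that each piece of the decomposition is a single edge whose checkerboard surface is a fiber. The genuine difference is how that last step is handled. The paper's proof observes that the checkerboard surface of a single edge is the state surface of a homogeneous, adequate state whose reduced graph is a tree, and then cites the results of Futer and Futer--Kalfagianni--Purcell to conclude it is a fiber. You instead argue from scratch: the surface is two disks joined by one twisted band, hence an embedded disk in $S^3$, its boundary is an unknot, and a spanning disk of the unknot is a fiber of the solid-torus complement. Your route is more elementary and self-contained, avoiding the machinery for homogeneously adequate states, at the cost of a few lines of elementary topology; the paper's route is shorter on the page but leans on nontrivial external results. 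You also take more care than the paper on the order-of-operations point: the hypothesis concerns the reduced state graph, while \refthm{Main} decomposes along cut-vertices before reducing. The paper elides this by decomposing the reduced tree directly into its edges (the blocks of a tree), whereas your argument that any cycle in a reduced block would descend to a cycle in the reduced state graph settles the same point block by block; both are legitimate. One small caveat in your disk identification: $\chi=1$ with one boundary component also allows a M\"obius band, so you should note orientability (the piece is bipartite since $S_\sigma$ is oriented), or, more simply, observe that the surface deformation retracts to a single edge and is therefore simply connected, hence a disk.
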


\begin{proof}
If the reduced state graph $G_\sigma$ is a tree, it decomposes along cut-vertices into a collection of graphs made up of single edges. For each single edge, the associated checkerboard surface is the state surface of a homogeneous state, thus by \cite{Futer:Fiber, FKP:Guts}, each is fibered. Then \refthm{Main} implies the original link is also fibered. 
\end{proof}

Note that Gir{\~a}o, Nogueira, and Salgueiro also obtained a result that a particular state surface is a fiber if and only if the associated state graph is a tree~\cite{GNS:FiberSurfaces}. However, the converse to \refcor{Tree} is not true. There are examples of states for which the state graph is not a tree, and yet the state surface is still a fiber. We present examples below, such as \refexamp{FiberNotTree}. These contrast to the results of \cite{Futer:Fiber, FKP:Guts, GNS:FiberSurfaces}.

\section{Murasugi sums and cut-vertices}\label{Sec:Murasugi}

This section gives the proof of \refthm{Main}. We first recall standard definitions and results from fibered knot theory.

\begin{definition}\label{Def:MurasugiSum}
The oriented surface $T$ in $S^3$ with boundary $L$ is the \emph{Murasugi sum} of the two oriented surfaces $T_1$ and $T_2$ with boundaries $K_1$ and $K_2$ if there exists a $2$-sphere $S$ in $S^3$ bounding balls $B_1$ and $B_2$ with $T_i\subset B_i$ for $i=1,2$, such that $T=T_1\cup T_2$ and $T_1\cap T_2=D$, where $D\subset S$ is a $2k$-sided polygon (a disk). See \reffig{MurasugiSum}.  
\end{definition}

\begin{figure}
  \import{figures/}{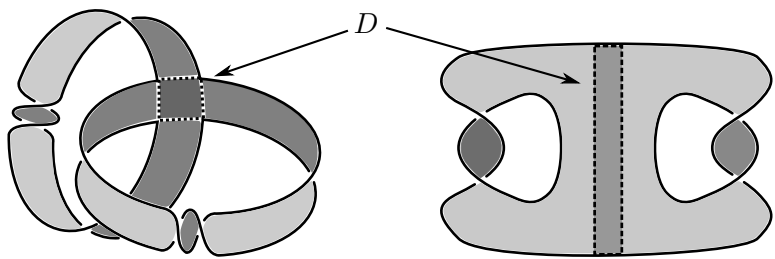}
\caption{Two possible ways to obtain a surface as the Murasugi sum of two Hopf bands.} 
\label{Fig:MurasugiSum}
\end{figure}

The result concerning Murasugi sum we need is the following, due to Gabai~\cite{Gabai:Fiber}.

\begin{theorem}[Gabai]\label{Thm:Gabai}
Let $T\subset S^3$, with $\partial T=K$, be a Murasugi sum of oriented surfaces $T_i\subset S^3$, with $\partial T_i=K_i$, for $i=1,2$.  Then $S^3-L$ is fibered with fiber $T$ if and only if $S^3-K_i$ is fibered with fiber $T_i$ for $i=1,2$. \qed
\end{theorem}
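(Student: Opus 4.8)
The plan is to pass to the language of sutured manifolds and exploit the characterization of fibers in those terms. Recall that a Seifert surface $R$ for a link $L\subset S^3$ is a fiber of a fibration of $S^3-L$ if and only if the complementary sutured manifold $M(R):=(S^3\cut R,\gamma)$, with $R_+,R_-$ the two cut copies of $R$ and suture $\gamma$ a neighborhood of $\partial R$, is a product sutured manifold $R\times I$. This is Stallings' fibration criterion: $S^3$ minus a neighborhood of $L$ fibers over $S^1$ with fiber $R$ exactly when cutting along $R$ yields $R\times I$, the gluing map becoming the monodromy. Thus \refthm{Gabai} reduces to showing that $M(T)$ is a product sutured manifold if and only if both $M(T_1)$ and $M(T_2)$ are.

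Next I would set up the geometry of \refdef{MurasugiSum} inside the complement (see \reffig{MurasugiSum}). The sphere $S$ meets $T=T_1\cup T_2$ precisely in the polygon $D$, so its complementary disk $E:=\overline{S\setminus D}$ becomes a properly embedded disk in $M(T)$, with $\partial E=\partial D$ lying on the cut copies of $T$ and crossing the suture $\gamma$ in the $2k$ points coming from the vertices of the polygon, so that $\partial E$ runs alternately through $R_+$ and $R_-$. Since $T$ agrees with $T_i$ in the interior of $B_i$, the disk $E$ separates $M(T)$ into the two pieces $B_i\cut T_i$, each of which, after the harmless attachment of a collar ball along $E$, is identified with the complementary sutured manifold $M(T_i)$ of the factor. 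In short, $M(T)=M(T_1)\cup_E M(T_2)$ is a sutured gluing along the disk $E$, and this structural fact drives both directions.

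For the backward direction I would argue constructively: if $M(T_1)\cong T_1\times I$ and $M(T_2)\cong T_2\times I$, then stacking the two product structures across $E$ produces a product $T\times I=(T_1\cup_D T_2)\times I$ on $M(T)$. Equivalently, and more transparently, the open-book monodromy of $T$ is the composition $\tilde h_1\circ\tilde h_2$ of the monodromies of the factors, each extended by the identity over the remainder of $T$; this is the classical plumbing construction of Stallings, carried out here along a $2k$-gon. Once the identification along $E$ is understood, verifying that the glued object is genuinely a product is a direct check.

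The forward direction is where the real work lies, and I expect it to be the main obstacle. Here I would invoke Gabai's theory of sutured manifold decompositions: starting from the product $M(T)=T\times I$, decompose along the disk $E$. The heart of the matter is the theorem that decomposing a product (indeed a taut) sutured manifold along such a well-groomed product disk yields sutured manifolds that are again products, forcing $M(T_1)$ and $M(T_2)$ to be $T_1\times I$ and $T_2\times I$. Making this precise requires checking that $E$ is an admissible, well-groomed decomposing surface, handling the full $2k$-gon suture pattern rather than a single product square, and confirming that the two halves of the decomposition recover the factor surfaces. This converse is not visible from homological data such as the Alexander polynomial, which is precisely why the sutured-manifold machinery, rather than an elementary argument, is needed.
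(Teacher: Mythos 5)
First, a point of comparison: the paper does not prove \refthm{Gabai} at all --- it is quoted as Gabai's theorem with a citation and a \emph{qed} mark, so your attempt can only be measured against the arguments in the literature. Your framework is the right one and matches Gabai's: the reduction of fibering to the complementary sutured manifold $M(R)=(S^3\cut R,\gamma)$ being a product, the identification of $E=\overline{S\setminus D}$ as a properly embedded disk whose boundary alternates between $R_+$ and $R_-$ and meets the sutures in the $2k$ vertices of the polygon $D$, and the identification $M(T)=M(T_1)\cup_E M(T_2)$ (with each piece $B_i\cut T_i$ becoming $M(T_i)$ after capping with a ball along $E$) are all correct. Your backward direction --- gluing the two product structures across $E$, equivalently composing the extended monodromies --- is the classical Stallings plumbing argument as generalized by Gabai, and is acceptable as a sketch.

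The forward direction, however, has a genuine gap, and it is the entire theorem. You invoke ``the theorem that decomposing a product (indeed a taut) sutured manifold along such a well-groomed product disk yields sutured manifolds that are again products.'' No such theorem exists in the generality you need. A product disk is, by definition, a disk meeting the suture in exactly two points; your $E$ meets it in $2k\geq 4$ points, as you yourself concede, and for such disks the statement is false. Already in the trivial product $D^2\times I$ (a ball with one suture) there is an embedded disk whose boundary alternates four times between $R_+$ and $R_-$ such that, for one of the two admissible orientations of the decomposing disk, each resulting piece is a ball with \emph{two} sutures, whose $R_+$ is a compressible annulus --- not taut, hence certainly not a product. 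Note also that tautness would not suffice anyway: taut sutured manifolds need not be products, so even a tautness-preservation lemma would not close the argument. What saves the Murasugi-sum situation is the special geometric position of $E$: the sphere $S$ forces the correct orientation of $E$ and guarantees that the two pieces inherit exactly the natural sutured structures of $M(T_1)$ and $M(T_2)$; proving that \emph{in this situation} a product decomposes into products is precisely the content of Gabai's converse theorem, and his proof is a genuine argument about how $E$ sits relative to the product structure, not an application of a general decomposition lemma. As written, your forward direction therefore assumes the statement it is meant to establish, dressed up as a citation to machinery that does not cover this case.
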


\begin{lemma}\label{Lem:Murasugi}
Let $G_{\sigma}$ be a state graph embedded in $S^3$ and suppose there is a cut-vertex $v$ that decomposes $G_\sigma$ into graphs $G_1$, $G_2$. Consider the state surface $S_i$ induced by $\sigma$ and the subgraph $G_i$ of $G_\sigma$, $i=1, 2$. Then $S_\sigma$ is a fiber if and only if each of  $S_1$ and $S_2$ is a fiber.    
\end{lemma}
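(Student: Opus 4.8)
The plan is to realize the state surface $S_\sigma$ as a Murasugi sum of $S_1$ and $S_2$ in the sense of \refdef{MurasugiSum}, and then to invoke Gabai's theorem, \refthm{Gabai}, whose conclusion is precisely the biconditional we want. Since we are in the oriented setting, I would first observe that $S_1$ and $S_2$ inherit orientations from $S_\sigma$, so that the oriented hypotheses of \refthm{Gabai} are met. With that in hand, the entire content of the lemma reduces to exhibiting the Murasugi sum structure directly from the combinatorics of the cut-vertex $v$.

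To produce that structure, I would work with the state disk $D_v$ corresponding to $v$, bounded by the state circle $C$. Because $v$ decomposes $G_\sigma$ into $G_1$ and $G_2$, the bands recorded by the edges of $G_1$ are disjoint from those recorded by the edges of $G_2$, and the two subsurfaces overlap in exactly the single shared disk at $v$; that is, $S_\sigma = S_1 \cup S_2$ and $S_1 \cap S_2 = D_v$. Using the innermost–circle analysis from the proof of \reflem{CutVertices}, I would arrange $C$ so that all state circles and bands of $G_1$ lie in the disk $D$ bounded by $C$ in the projection sphere, while all of $G_2$ lies in the complementary disk $S^2 - D$. Consequently the bands of $S_1$ run from $D_v$ into the inside of $C$ and those of $S_2$ into the outside.

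Next I would construct the Murasugi $2$-sphere $S$. Because $D_v$ separates the inside bands from the outside bands, I can choose $S$ to contain $D_v$ and to separate the inside region of $C$ from the outside region, so that $S_1$ lies in one ball $B_1$, $S_2$ lies in the other ball $B_2$, and $S \cap S_\sigma = D_v$. Concretely, one thickens the projection plane near $C$, pushes the two complementary disks to opposite sides, and caps off to a sphere through $D_v$; the planar separation established in the previous step is exactly what guarantees that $S_1$ and $S_2$ land in distinct balls. It then remains to check that $D_v$ satisfies the hypothesis on the plumbing region, namely that $\partial D_v$ is divided by the band attachments into an even number of arcs lying alternately on $\partial S_1$ and $\partial S_2$, exhibiting $D_v$ as the $2k$-gon of \refdef{MurasugiSum}.

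The main obstacle I anticipate is this final verification: making the separating sphere rigorous and confirming the $2k$-gon condition directly from the cyclic pattern of band attachments around $C$, where the inside ($G_1$) and outside ($G_2$) attachments need not occur in strictly alternating order. I expect to handle this by noting that consecutive like-side attachments can be grouped without affecting the existence of the separating sphere, so that the shared disk still presents the alternating boundary structure required of the plumbing polygon. Once the Murasugi sum structure is confirmed, \refthm{Gabai} immediately yields both directions of the claim: $S_\sigma$ is a fiber if and only if each of $S_1$ and $S_2$ is a fiber.
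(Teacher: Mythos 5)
Your overall strategy---realize $S_\sigma$ as a Murasugi sum of $S_1$ and $S_2$ along the disk $D_v$ and quote Gabai's theorem (\refthm{Gabai})---is exactly the paper's proof, and several of your intermediate points are fine: the inherited orientations, the identities $S_\sigma = S_1\cup S_2$ and $S_1\cap S_2 = D_v$, and the observation that runs of consecutive like-side attachment arcs can be grouped so that $\partial D_v$ presents an alternating $2k$-gon structure. Indeed, that last point, which you single out as the main anticipated obstacle, is automatic and is not where the difficulty lies.

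The genuine gap is the earlier step, where you claim that ``using the innermost-circle analysis from the proof of \reflem{CutVertices}'' you can arrange all of $G_1$ inside the disk $D$ bounded by $C$ and all of $G_2$ in $S^2 - D$. That analysis produces one particular cut-vertex (an innermost bad circle) whose associated decomposition happens to be inside-graph versus outside-graph; it says nothing about an arbitrary cut-vertex decomposition, which is what the lemma hypothesizes. A decomposition at $v$ is an arbitrary bipartition of the branches of $G_\sigma$ at $v$; each branch lies on one side of $C$, but the bipartition need not respect sides, and your desired arrangement can be flatly impossible in the projection sphere. For example, let $C$ have a branch $Z$ inside attached by two bands near positions $0$ and $\pi$ on $C$, a branch $W$ outside attached by two bands near $\pi/2$ and $3\pi/2$, and a branch $U$ inside attached by one band, and take $G_1$ to consist of $Z$ and $W$, $G_2$ to consist of $U$. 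Since the attachment arcs of $Z$ and $W$ interleave around $C$, planarity forbids ever drawing them on the same side of $C$, so no rearrangement of the kind you describe exists, and the ``bowl plus lid'' sphere separating inside from outside fails to separate $S_1$ from $S_2$. The lemma remains true in such cases, but the separating sphere must be produced differently: either build the capping disk so that it dips below the $G_2$-branches lying on the $G_1$ side (using the fact that same-side branches have non-interleaving attachments), or first prove the lemma when $G_2 - v$ is connected---so that it genuinely lies on one side of $C$---and then induct on the number of branches to handle a general bipartition. To be fair, the paper's own proof is terse on precisely this point (it simply asserts that the cut-vertex yields the required balls $B_1$, $B_2$), and for the only decomposition the paper actually uses, namely the innermost-circle decomposition in \refprop{MainNotReduced}, your inside/outside picture is correct; but as a proof of the lemma as stated, the arrangement step is unjustified and needs to be replaced.
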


\begin{proof}
Each vertex $v$ of $G_\sigma$ is associated to a disk $D$ in the state surface $S_\sigma$. The fact that $v$ is a cut-vertex means that $S_1$ and $S_2$ lie in balls $B_1$, $B_2$, with $D = S_1\cap S_2$, as required by \refdef{MurasugiSum}. 
Thus the result follows from \refthm{Gabai}. 
\end{proof}

\refthm{Gabai} gives a quick proof of the following proposition.

\begin{proposition}\label{Prop:MainNotReduced}
Suppose $K$ is a link diagram with oriented state surfaces $S_\sigma$ and associated state graph $G_\sigma$ embedded as a spine in $S_\sigma$. Decompose the state graph $G_\sigma$ along cut-vertices into planar graphs. Then 
$S_\sigma$ is a fiber if and only if for each graph in the decomposition of $G_\sigma$ the associated checkerboard surface is a fiber.
\end{proposition}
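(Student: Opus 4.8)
The plan is to iterate \reflem{Murasugi} across all the cut-vertices of $G_\sigma$. The lemma handles a single cut-vertex: it identifies the disk associated to the cut-vertex as a polygonal disk $D$ along which the state surface decomposes as a Murasugi sum of the two pieces $S_1$, $S_2$, and then \refthm{Gabai} gives that $S_\sigma$ is a fiber if and only if each $S_i$ is. So the first step is to observe that \reflem{CutVertices} already produces the full decomposition of $G_\sigma$ into a finite collection of $2$-connected planar graphs $\{G_1,\dots,G_n\}$, each of which has a planar embedding induced from the state surface.

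Next I would set up an induction on the number of cut-vertices (equivalently, on $n$). The base case $n=1$ is vacuous: there are no cut-vertices, the single graph is all of $G_\sigma$, and its planar embedding is the one induced by the surface, so the associated checkerboard surface is exactly $S_\sigma$ itself. For the inductive step, pick an innermost cut-vertex as in the proof of \reflem{CutVertices} — that is, one that separates off a single $2$-connected piece $G_j$ lying on the inside of an innermost state circle. By \reflem{Murasugi}, $S_\sigma$ decomposes as a Murasugi sum of the surface $S_j$ carried by $G_j$ and the surface $S'$ carried by the remainder $G_\sigma' = G_\sigma \setminus (G_j\setminus\{v\})$. Thus $S_\sigma$ is a fiber if and only if both $S_j$ and $S'$ are. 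Since $G_j$ is $2$-connected with its surface-induced planar embedding, $S_j$ is precisely the checkerboard surface of $G_j$; meanwhile $G_\sigma'$ has strictly fewer cut-vertices, so the induction hypothesis applies to it and says $S'$ is a fiber if and only if the checkerboard surface of each remaining $G_i$ ($i\neq j$) is a fiber. Combining these two equivalences yields the statement for all of $\{G_1,\dots,G_n\}$.

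The main thing to be careful about is that the pieces produced by iterating \reflem{Murasugi} really are the same pieces produced by the one-shot decomposition of \reflem{CutVertices}, and that the planar embedding used to define each checkerboard surface is the surface-induced embedding rather than some abstract planar embedding. This is exactly the subtlety flagged in the paragraph following \reflem{CutVertices}: an abstract planar embedding of a piece need not agree with the embedding inherited from $S_\sigma$, but the decomposition along cut-vertices is constructed so that each piece inherits the correct embedding. So the key step — and the only real obstacle — is bookkeeping: verifying that at each stage the innermost cut-vertex splits off a $2$-connected piece whose surface $S_j$ is literally the checkerboard surface of $G_j$ with its induced embedding, so that the local statement of \reflem{Murasugi} glues together into the global statement. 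Once that identification is in hand, the proposition follows immediately by induction from \reflem{Murasugi} and \refthm{Gabai}.
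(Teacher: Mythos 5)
Your proposal is correct and follows essentially the same route as the paper: decompose $G_\sigma$ via \reflem{CutVertices} and apply \reflem{Murasugi} (i.e.\ Gabai's theorem, \refthm{Gabai}) across the cut-vertices. The paper's proof leaves the iteration implicit in a two-sentence argument, whereas you make the induction on the number of cut-vertices explicit and flag the embedding bookkeeping; this is a more detailed write-up of the same idea, not a different proof.
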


\begin{proof}
The graph $G_\sigma$ decomposes along cut-vertices into planar subgraphs $\{G_{1}, \dots, G_{n}\}$ by \reflem{CutVertices}. By \reflem{Murasugi}, $S_\sigma$ is a fiber if and only if the state surfaces induced by the $G_{j}$ are fibers. 
\end{proof}

\subsection{Further decompositions of graphs}\label{Sec:Graphs}
To go from \refprop{MainNotReduced} to \refthm{Main}, we need to consider reduced graphs. The main tool is the following lemma. 

\begin{lemma}\label{Lem:MultipleEdges}
Suppose the state graph $G_\sigma$ has a planar embedding induced from the state surface $S_\sigma$. Suppose it has multiple edges connecting a pair of vertices, and suppose that those edges are parallel (ambient isotopic) in the plane. 
\begin{enumerate}
\item If multiple parallel edges have the same label, then $S_\sigma$ is a fiber if and only if the same is true for the state surface corresponding to the graph with all but one of the edges removed.
\item If two of the parallel edges have distinct labels, then $S_\sigma$ is not a fiber.
\end{enumerate}
\end{lemma}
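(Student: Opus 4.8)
The plan is to deduce both statements from Gabai's theorem on Murasugi sums (\refthm{Gabai}) by exhibiting the piece of $S_\sigma$ carried by the two parallel bands as a Murasugi summand. First I would reduce to the case of two \emph{adjacent} parallel edges $e_1,e_2$ bounding an empty bigon: since the edges are parallel within the plane, between two innermost parallel edges there are no other vertices or edges of $G_\sigma$, so the region they cobound is a disk meeting the graph only in $e_1\cup e_2$. For part (1) I remove the duplicated edges one at a time, so it suffices to treat two adjacent same-label edges; for part (2), if two of the parallel edges carry distinct labels then the labels cannot all agree along the parallel family, so some adjacent pair $e_1,e_2$ has distinct labels, and I work with that pair.

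Next I would set up the Murasugi sum. Let $U,W$ be the state disks at the endpoints of $e_1,e_2$, and let $S_{\sigma'}$ denote the state surface obtained by deleting the band $e_2$ (keeping $e_1$ and everything else). Viewing the band $e_1$ as a square $D$ whose two long sides lie on $\partial S_\sigma$ and whose two short sides are its attaching arcs to $U$ and $W$, I claim $S_\sigma$ is the plumbing (a $2k$-gon Murasugi sum with $k=2$) of $S_{\sigma'}$ with the annulus $T_1=e_1\cup e_2$ along $D$. Concretely, the emptiness of the bigon lets me find a ball $B_1$ whose boundary sphere $S$ contains $D=e_1$, with the twisted band $e_2$ and the bigon pushed to the $B_1$-side and the remainder of $S_\sigma$ pushed to the other ball $B_2$; then $T_1\subset B_1$, $S_{\sigma'}\subset B_2$, $T_1\cup S_{\sigma'}=S_\sigma$ and $T_1\cap S_{\sigma'}=D$, exactly as in \refdef{MurasugiSum}. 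Here it is essential that the planar embedding is the one \emph{induced from the surface}, so that $e_1\cup e_2$ sits in $B_1$ as a standardly embedded annulus, unknotted and unlinked from the rest of $S_\sigma$.

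It then remains to identify the summand $T_1=e_1\cup e_2$ and apply \refthm{Gabai}. Traversing the core of this annulus runs once through each band, so the two half-twists either add or cancel according to the labels. If $e_1,e_2$ carry the same label the half-twists agree and $T_1$ is a Hopf band, which is a fiber; since $S_\sigma$ is the Murasugi sum of $T_1$ and $S_{\sigma'}$, \refthm{Gabai} gives that $S_\sigma$ is a fiber if and only if $S_{\sigma'}$ is, proving (1). If $e_1,e_2$ carry distinct labels the half-twists cancel and $T_1$ is a trivial (unknotted, untwisted) annulus whose boundary is a two-component unlink; such an annulus is not a fiber, so \refthm{Gabai} forces $S_\sigma$ not to be a fiber, proving (2).

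I expect the main obstacle to be the second step: turning the planar bigon data into an honest Murasugi sphere, i.e.\ verifying that the ball $B_1$ meets $S_\sigma$ in precisely the polygon $D$ and that $e_1\cup e_2$ embeds standardly there. The twist-sign bookkeeping that distinguishes the Hopf band from the trivial annulus, together with the attendant check that $T_1$ is oriented compatibly with $S_\sigma$, will also require care, since it depends on reading the band framings off the induced planar embedding rather than off an arbitrary planar picture of $G_\sigma$.
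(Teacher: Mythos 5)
Your proof is correct and takes essentially the same approach as the paper: the paper also decomposes $S_\sigma$ via Gabai's theorem as a Murasugi sum of the surface with one parallel edge removed and the annulus carried by the two parallel bands, identifying that annulus as a fibered Hopf band when the labels agree and as a non-fibered untwisted annulus when they differ. The paper's proof simply points to \reffig{HopfDecompose} and \reffig{AnnulusDecompose} for the decomposition; your reduction to an innermost empty bigon and explicit construction of the Murasugi sphere supplies the details those figures leave implicit.
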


\begin{proof}
In both cases, we may use a Murasugi sum to decompose the surface. In the first case, parallel edges with the same label correspond to a Murasugi sum of a Hopf band; see \reffig{HopfDecompose}. The twisted annulus bounded by a Hopf band is well-known to be fibered. 
\begin{figure}
\includegraphics{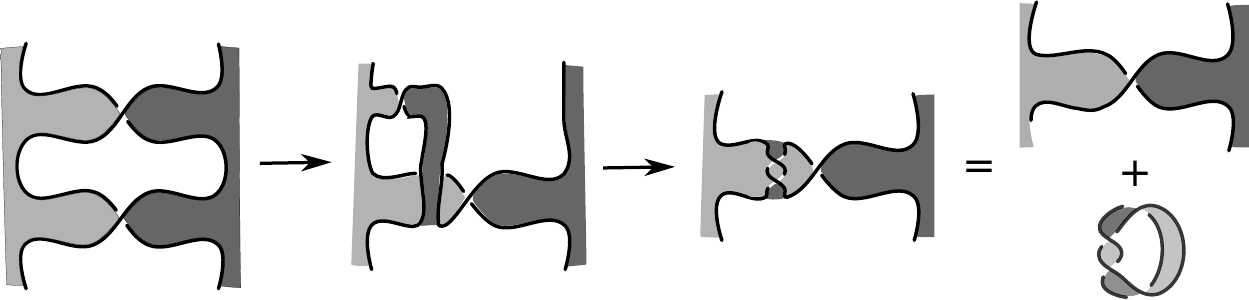}
\caption{A pair of eges with the same label correspond to the decomposition of a Hopf band.}
\label{Fig:HopfDecompose}
\end{figure}

In the second case, parallel edges with different labels correspond to a Murasugi sum of an annulus; see \reffig{AnnulusDecompose}. The untwisted annulus in $S^3$ is well-known not to be fibered. 
Then the result follows by Gabai's theorem, \refthm{Gabai}. \qedhere
\begin{figure}
\includegraphics{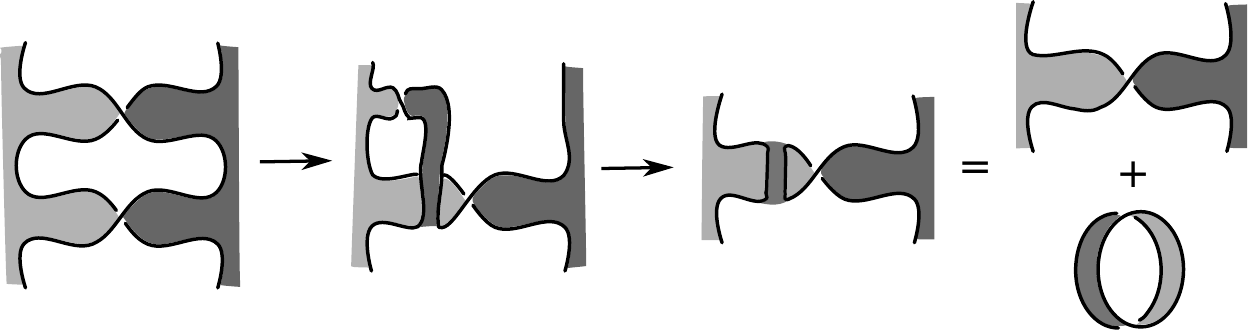}
\caption{A pair of eges with distinct label correspond to the decomposition of an annulus.}
\label{Fig:AnnulusDecompose}
\end{figure}
\end{proof}

\begin{corollary}\label{Cor:MultiEdgesDifferentLabel}
Suppose $K$ is a link in $S^3$ with oriented state surface $S_\sigma$, and suppose the associated state graph $G_\sigma$ has parallel edges with distinct labels. Then $S_\sigma$ is not a fiber. 
\end{corollary}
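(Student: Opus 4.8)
The plan is to deduce \refcor{MultiEdgesDifferentLabel} as a direct consequence of the machinery already assembled, essentially chaining together \refprop{MainNotReduced} and the second case of \reflem{MultipleEdges}. The key observation is that parallelism of edges is a planar notion, so before I can invoke \reflem{MultipleEdges} I must arrange that the two offending edges live in a planar piece of the decomposition where their parallelism is visible. First I would decompose $G_\sigma$ along its cut-vertices into planar graphs $\{G_1, \dots, G_n\}$, as guaranteed by \reflem{CutVertices}, each carrying a planar embedding induced from the state surface $S_\sigma$. By \refprop{MainNotReduced}, $S_\sigma$ is a fiber if and only if every checkerboard surface associated to each $G_i$ is a fiber; hence it suffices to exhibit a single $G_i$ whose checkerboard surface fails to be a fiber.

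The second step is to locate the two parallel edges with distinct labels inside one of these pieces. Since the two edges are parallel in the plane and connect the same pair of vertices, they bound a bigon disjoint from the rest of the graph, and in particular no cut-vertex separates them; thus they end up in the same planar component, say $G_j$, with their parallelism preserved by the embedding induced from $S_\sigma$. Now \reflem{MultipleEdges}(2) applies verbatim to $G_j$: two parallel edges of distinct labels correspond, via a Murasugi sum, to the untwisted annulus in $S^3$, which is not a fiber, so the checkerboard surface of $G_j$ is not a fiber. Combining with the ``only if'' direction of \refprop{MainNotReduced}, the surface $S_\sigma$ itself cannot be a fiber.

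The main obstacle I anticipate is purely bookkeeping rather than substantive: I must be careful that the hypothesis ``$G_\sigma$ has parallel edges with distinct labels'' is interpreted with respect to an embedding for which \reflem{MultipleEdges} is actually applicable. The subtlety is that the global planar embedding of $G_\sigma$ need not coincide with the embeddings induced on the cut-vertex pieces, but because a bigon between two parallel edges cannot be crossed by a cut-vertex, the parallelism descends to the relevant piece $G_j$, so the distinct-label condition is genuinely inherited. Once this is checked the corollary is immediate, and in fact the argument is short enough that I would present it as a two-line proof: decompose by \refprop{MainNotReduced}, then apply \reflem{MultipleEdges}(2) to the piece containing the parallel edges.
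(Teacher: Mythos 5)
Your proof is correct and follows essentially the same route as the paper: decompose along cut-vertices, observe that the two parallel edges with distinct labels must land in a single planar piece with their parallelism intact, and apply \reflem{MultipleEdges}(2) to conclude that the corresponding checkerboard surface is not a fiber. The only difference is cosmetic: you invoke \refprop{MainNotReduced} where the paper cites \refthm{Main}, which is if anything the cleaner choice, since it sidesteps having to check that the offending pair of edges survives the reduction and edge-collapsing steps built into the statement of \refthm{Main}.
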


\begin{proof}
Parallel edges with distinct labels will remain parallel edges with distinct labels after decomposing along cut-vertices. Then the result follows from \refthm{Main} and \reflem{MultipleEdges}. 
\end{proof}

\begin{proof}[Proof of \refthm{Main}]
By \refprop{MainNotReduced}, $S_\sigma$ is a fiber if and only if for each graph in the decomposition of its state graph along cut-vertices, the associated checkerboard surface is a fiber. By \reflem{MultipleEdges}, that checkerboard surface is a fiber if and only if the checkerboard surface associated with the corresponding reduced graph is a fiber. By \reflem{Consecutive}, the checkerboard surface associated with the reduced graph is a fiber if and only if the same is true of the checkerboard surface associated with the graph obtained by collapsing consecutive edges that do not have the same label.
\end{proof}

\section{Stallings criterion}\label{Sec:Stallings}

By \refthm{Main}, we may reduce the task of determining which state surfaces are fibers to considering checkerboard surfaces. To determine which checkerboard surfaces are fibers, we will apply the following theorem of Stallings~\cite{Stallings}. 

\begin{theorem}[Stallings]\label{Thm:Stallings}
Let $S\subset S^3$ be a compact, connected, oriented surface with nonempty boundary $\partial S$. Let $S\times[-1,1]$ be a regular neighborhood of $S$ and let $S^+=S\times\{1\}\subset S^3-S$. Let $f=\varphi|_{S}$, where $\varphi\from S\times[-1,1]\to S^+$ is the projection map. Then $S$ is a fiber for the link $\partial S$ if and only if the induced map $f_* \from \pi_1(S) \to \pi_1(S^3-S)$ is an isomorphism.   
\end{theorem}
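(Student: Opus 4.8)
The plan is to use the hypothesis to pin down the topology of the complement of $S$ and then invoke Stallings' fibration theorem for $3$-manifolds. Write $M = S^3 - \mathring{N}(\partial S)$ for the link exterior, let $\hat S \subset M$ be the properly embedded surface obtained by truncating $S$, and let $M_S$ be the result of cutting $M$ along $\hat S$; its boundary contains two copies $\hat S^+$ and $\hat S^-$ of $\hat S$. The first point I would record is that $S^3 - S$ is homotopy equivalent to $M_S$, so $\pi_1(S^3 - S) \cong \pi_1(M_S)$, and that under this identification the pushoff map $f_*$ becomes the inclusion-induced map $i^+_* \from \pi_1(\hat S^+) \to \pi_1(M_S)$. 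Reassembling $M$ from $M_S$ by regluing $\hat S^+$ to $\hat S^-$ exhibits $\pi_1(M)$ as an HNN extension with vertex group $\pi_1(M_S)$, edge group $\pi_1(\hat S)$, and the two edge inclusions equal to $i^+_*$ and $i^-_*$; the stable letter maps to a generator of the surjection $\pi_1(M) \to \ZZ$ dual to $S$.

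The forward direction is then immediate. If $S$ is a fiber, then $M$ is the mapping torus of a self-homeomorphism of $S$, and cutting along the fiber gives $M_S \cong S \times [-1,1]$. Hence $\hat S^+ \hookrightarrow M_S$ is a homotopy equivalence and $f_*$ is an isomorphism.

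For the converse I would argue as follows. The hypothesis says $i^+_*$ is an isomorphism; in particular it is injective, so $\hat S$ is incompressible. Because $S$ lies in $S^3$, there is a duality between the two pushoffs: on first homology the maps $i^+_*$ and $i^-_*$ are given by a Seifert matrix and its transpose, which have equal determinant. I would leverage this symmetry to promote surjectivity of $i^+_*$ to surjectivity of $i^-_*$, concluding that $i^-_*$ is an isomorphism as well. With both edge maps isomorphisms, the HNN extension is a semidirect product $\pi_1(M) \cong \pi_1(M_S) \rtimes \ZZ$, so the kernel of $\pi_1(M) \to \ZZ$ is exactly $\pi_1(M_S) \cong \pi_1(\hat S)$, which is finitely generated. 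Now I would apply Stallings' theorem on fibering: since $M$ is irreducible and the kernel of the surjection $\pi_1(M) \to \ZZ$ is finitely generated and nontrivial, $M$ fibers over $S^1$, with fiber $F$ satisfying $\pi_1(F) \cong \pi_1(\hat S)$. Finally, $\hat S$ is an incompressible surface dual to the same class and with matching fundamental group, so by the uniqueness of such surfaces up to isotopy (Waldhausen), $\hat S$ is isotopic to the fiber; that is, $S$ is a fiber.

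The main obstacle is the converse. Its genuine content is the implication ``finitely generated kernel $\Rightarrow$ fibration,'' which is precisely Stallings' deep theorem, so the real work is in arranging the hypotheses so that it applies. The two supporting steps needing care are the duality argument upgrading the negative pushoff to an isomorphism, and the final identification of the abstract fiber with $\hat S$ itself rather than merely a homotopy-equivalent surface.
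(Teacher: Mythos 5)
The paper itself gives no proof of this statement: it is quoted as Stallings' theorem with a citation, so there is nothing internal to compare against, and your argument must stand on its own. Its skeleton (cut along the surface, HNN description of the link group, Stallings' 1962 fibration criterion) is in the right territory, and the forward direction is fine. But the converse contains a genuine gap at exactly the step you flag as needing care: the ``duality'' upgrade of $i^+_*$ to $i^-_*$. On first homology the two pushoffs are indeed given by a Seifert matrix $V$ and its transpose, and invertibility of $V$ gives invertibility of $V^T$. That, however, only makes $i^-_*$ an isomorphism on abelianizations, and for maps of free groups an isomorphism on abelianization does not imply surjectivity. Concretely, the endomorphism of $F_2=\langle a,b\rangle$ given by $a\mapsto a$, $b\mapsto bab\,a^{-1}b^{-1}$ induces the identity on $H_1$, yet its image $\langle a,\, bab\,a^{-1}b^{-1}\rangle$ is a proper subgroup of infinite index (Stallings folding of the corresponding labeled graph yields a folded core graph that is not the rose). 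So no homological argument can deliver surjectivity of $i^-_*$ on $\pi_1$. Without it, your HNN extension is merely an ascending HNN extension with finitely generated base, and such groups can have non--finitely generated kernel (e.g.\ $\langle a,t\mid tat^{-1}=a^2\rangle$), so the 1962 theorem cannot be invoked. Worse, the implication ``$i^+_*$ iso $\Rightarrow$ $i^-_*$ iso'' is essentially equivalent to the theorem itself: it is true, but the known proofs deduce it from fiberedness (or from symmetry of the Bieri--Neumann--Strebel invariant for $3$-manifold groups), so assuming it here is close to circular.

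The standard proof of the converse avoids $i^-_*$ entirely and is worth knowing. Work with the compact manifold $Y = S^3 - \mathring{N}(S)$. Any embedded $2$-sphere in $Y$ bounds a ball in $S^3$ on each side, and since $S$ is connected it lies in one of them, so $Y$ is irreducible by Alexander's theorem (note this also repairs a secondary gap in your write-up: you assert irreducibility of the link exterior, which fails for split links). The hypothesis says $S^+\subset \partial Y$ is incompressible and $\pi_1(S^+)\to\pi_1(Y)$ is an isomorphism; the product theorem of Stallings/Waldhausen (see Hempel, \emph{$3$-Manifolds}, Theorem 10.2) then gives a homeomorphism of pairs $(Y,S^+)\cong (S\times[0,1],\, S\times\{0\})$. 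Regluing $Y$ to the product neighborhood $S\times[-1,1]$ exhibits the link exterior as a mapping torus with fiber $S$ itself. This route needs neither the surjectivity of $i^-_*$, nor the finitely-generated-kernel fibration theorem, nor your final Waldhausen-type step identifying the abstract fiber with $\hat S$ -- each of which, in your outline, carries unresolved content.
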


As a consequence of \refthm{Stallings}, we obtain a tool to determine fibering from state graphs:

\begin{theorem}\label{Thm:Algebraic}
Let $S_\sigma$ be a state surface with state graph $G_\sigma$ embedded as a spine, and let $x$ be a basepoint on $G_\sigma\subset S_\sigma$. Let $f = \varphi|_S$, where $\varphi\from S_\sigma\times[-1,1]\to S_\sigma^+$ is the projection map, and let $y=f(x)$. Then the map $f$ uniquely determines a map $\phi\from \pi_1(G_\sigma,x)\to \pi_1(S^3-G_\sigma, y)$, and $S_\sigma$ is a fiber if and only if the map $\phi$ is an isomorphism of groups.
\end{theorem}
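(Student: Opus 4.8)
The plan is to deduce \refthm{Algebraic} from the Stallings criterion, \refthm{Stallings}, by systematically replacing the surface $S_\sigma$ with its spine $G_\sigma$, both in the domain and in the complement. There are three ingredients. First, the inclusion of the spine $G_\sigma\hookrightarrow S_\sigma$ is a homotopy equivalence, hence induces an isomorphism $i_*\from\pi_1(G_\sigma,x)\to\pi_1(S_\sigma,x)$. Second, the inclusion $S^3-S_\sigma\hookrightarrow S^3-G_\sigma$ induces an isomorphism $j_*\from\pi_1(S^3-S_\sigma,y)\to\pi_1(S^3-G_\sigma,y)$. Third, the map $\phi$ built from $f$ fits into a commutative square with $f_*$, $i_*$, and $j_*$. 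Granting these, $\phi$ is an isomorphism if and only if $f_*$ is, and \refthm{Stallings} finishes the proof.

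To define $\phi$, recall that $f=\varphi|_{S_\sigma}$ sends each point $p\in S_\sigma=S_\sigma\times\{0\}$ to its positive pushoff $p^+=(p,1)\in S_\sigma^+$, and that $S_\sigma^+\subset S^3-S_\sigma$. Restricting to the spine gives a map $f|_{G_\sigma}\from G_\sigma\to G_\sigma^+$, where $G_\sigma^+=G_\sigma\times\{1\}$. Since $G_\sigma^+$ is disjoint from $G_\sigma$, we have $G_\sigma^+\subset S^3-G_\sigma$, and I take $\phi\from\pi_1(G_\sigma,x)\to\pi_1(S^3-G_\sigma,y)$ to be the homomorphism induced by $f|_{G_\sigma}$, with $y=f(x)=x^+$. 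This is manifestly determined by $f$ alone, which gives the uniqueness asserted in the statement. Commutativity of the square is then formal: for every $p\in G_\sigma$ the two maps $G_\sigma\to S^3-G_\sigma$ given by $f|_{G_\sigma}$ and by the composite $G_\sigma\hookrightarrow S_\sigma\xrightarrow{f}S^3-S_\sigma\hookrightarrow S^3-G_\sigma$ agree on the nose, both sending $p\mapsto p^+$, so $\phi=j_*\circ f_*\circ i_*$.

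The first isomorphism $i_*$ is immediate, since a spine is by definition a deformation retract of $S_\sigma$. The second isomorphism $j_*$ is the technical heart of the argument, and I expect it to be the main obstacle. I would take $N=S_\sigma\times[-1,1]$, the regular neighborhood already fixed in \refthm{Stallings}; because $G_\sigma$ is a spine of $S_\sigma$, this $N$ is simultaneously a regular neighborhood of $G_\sigma$. Let $E=S^3-\operatorname{int}N$ be the common exterior, so $\partial E=\partial N$. Standard regular-neighborhood theory shows that both $S^3-G_\sigma$ and $S^3-S_\sigma$ deformation retract onto $E$, by pushing the collars $N-G_\sigma$ and $N-S_\sigma$ out to $\partial N$. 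The inclusion $E\hookrightarrow S^3-G_\sigma$ factors as $E\hookrightarrow S^3-S_\sigma\hookrightarrow S^3-G_\sigma$; since the first inclusion and the composite are both homotopy equivalences, the two-out-of-three property forces the middle inclusion to be one as well, and it induces $j_*$. The basepoint causes no trouble, since $y=x^+\in S_\sigma\times\{1\}\subset\partial N=\partial E$ already lies in $E$, hence in all three spaces, and every map in sight is an inclusion fixing it.

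Combining the three ingredients, $\phi=j_*\circ f_*\circ i_*$ with $i_*$ and $j_*$ isomorphisms shows that $\phi$ is an isomorphism exactly when $f_*$ is. By \refthm{Stallings}, the latter holds if and only if $S_\sigma$ is a fiber for $\partial S_\sigma$, which completes the argument. The only point demanding genuine care is the identification of the two complements at the level of $\pi_1$; everything else is a diagram chase together with the spine deformation retraction.
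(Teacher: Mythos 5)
Your proposal is correct and follows essentially the same route as the paper: both use the spine inclusion $G_\sigma\hookrightarrow S_\sigma$ and the identification of $\pi_1(S^3-S_\sigma)$ with $\pi_1(S^3-G_\sigma)$ via a common regular neighborhood to transport Stallings' criterion (\refthm{Stallings}) from $f_*$ to $\phi$. Your two-out-of-three argument through the exterior $E=S^3-\operatorname{int}N$ is just a more explicit justification of the chain of deformation-retraction isomorphisms that the paper asserts directly.
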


\begin{proof}
The graph $G_{\sigma}$ lies naturally in $S_{\sigma}$ as a spine, and therefore the inclusion $G_{\sigma}\to S_{\sigma}$ induces an isomorphism $\pi_1(G_{\sigma})\to \pi_1(S_{\sigma})$. Similarly, there is an isomorphism
\[ \pi_1(S^3-N(S_{\sigma}))\to \pi_1(S^3-S_{\sigma})\]
induced by the deformation retraction $S^3-S_{\sigma}$ to $S^3-N(S_{\sigma})$, where $N(\cdot)$ denotes a regular neighborhood. Deformation retractions (and inclusions) induce the following additional isometries:
\[
\pi_1(S^3-N(S_{\sigma}))\cong \pi_1(S^3-N(G_{\sigma}))\cong \pi_1(S^3-G_{\sigma}).
\]
Finally note that we may also assume
\[ f(S_{\sigma})=S_{\sigma}^+\subset S^3-N(S_{\sigma}). \]

Consider the map
\[ \phi\from \pi_1(G_{\sigma}, x)\to\pi_1(S^3-G_{\sigma}, y) \]
induced by the following compositions:
\[
G_{\sigma}\to S_{\sigma} \to (S^3-S_{\sigma}) \to (S^3-N(S_{\sigma})) \to (S^3-N(G_{\sigma})) \to (S^3-G_{\sigma}),\]
where the first map is the inclusion, the second map is the map $f$, the third map is given by deformation retraction, and the fourth and fifth maps are given by inclusion. Thus the map $\phi$ is an isomorphism if and only if the map $f_*$ in Stallings theorem is an isomorphism, and $S_\sigma$ is a fiber if and only if $\phi$ is an isomorphism. 
\end{proof}

We call the map $\phi$ in \refthm{Algebraic} the \emph{Stallings map}. We need to determine the effect of the Stallings map. We start by considering the form of the complement $S^3-S_\sigma$. 

Suppose that a state graph $G_\sigma$, embedded as the spine of a state surface $S_\sigma$, is planar with no cut-vertices. Then $S^3-S_\sigma$ is a handlebody built as follows.
\begin{itemize}
\item There is one 0-handle $H_+$ above the plane of projection $P$, and one $H_-$ below.
\item For each region $R_j$ of $P-G_\sigma$, there is a 1-handle $T_j$ running through that region with one end on $H_+$ and the other on $H_-$. 
\end{itemize}

\begin{lemma}\label{Lem:StallingsLocal}
Suppose that the state graph $G_\sigma$, embedded as the spine of a state surface $S_\sigma$, is planar and has no cut-vertices. Suppose that $S_\sigma$ is oriented, so $G_\sigma$ is bipartite, with vertices labeled $+$ and $-$, and edges labeled $A$ and $B$ according to the choice of resolution. Let $\gamma$ be a directed arc in $G_\sigma\subset S_\sigma$ that runs over an edge.
Denote the 0-handles of $S^3-S_\sigma$ by $H_+$ and $H_-$, as above. Let $T_\ell$ denote the 1-handle running through the region to the left of the edge (determined by the direction of $\gamma$), and let $T_r$ denote the 1-handle running through the region to the right. 
Then the effect of the map $f=\varphi|_S$, where $\varphi\from S\times[-1,1]\to S^+$ is the projection map, is as follows.
  \begin{enumerate}
  \item If $\gamma$ is a monotonic arc running from a vertex labeled $+$ to one labeled $-$ over an edge labeled $A$, then $f(\gamma)$ runs from $H_+$ to $H_-$ along the 1-handle $T_\ell$.
  \item If $\gamma$ runs from a vertex labeled $+$ to one labeled $-$ over an edge labeled $B$, then $f(\gamma)$ runs from $H_+$ to $H_-$ along the 1-handle $T_r$.
  \item If $\gamma$ runs from a vertex labeled $-$ to one labeled $+$ over an edge labeled $A$, then $f(\gamma)$ runs from $H_-$ to $H_+$ along the 1-handle $T_r$.
  \item If $\gamma$ runs from a vertex labeled $-$ to one labeled $+$ over an edge labeled $B$, then $f(\gamma)$ runs from $H_-$ to $H_+$ along the 1-handle $T_\ell$.
  \end{enumerate}
\end{lemma}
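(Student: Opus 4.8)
The plan is to reduce everything to a local model near a single edge of $G_\sigma$ and to track the coorientation of $S_\sigma$ as $\gamma$ crosses the corresponding twisted band. First observe that the four cases come in two reverse pairs: reversing the direction of $\gamma$ turns case (1) into case (3) and case (2) into case (4), because reversing $\gamma$ interchanges the vertex labels $+$ and $-$ at its endpoints, interchanges $H_+$ and $H_-$, and interchanges the left-hand and right-hand regions carrying $T_\ell$ and $T_r$. The assignments in the statement are consistent under this reversal, so it suffices to treat the two cases in which $\gamma$ runs from a $+$ vertex to a $-$ vertex, over an $A$-edge and over a $B$-edge respectively.

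Next I would pin down the endpoints of $f(\gamma)$ using the orientation of $S_\sigma$. Because $S_\sigma$ is oriented it carries a positive normal, and the positive pushoff $S_\sigma^+ = S_\sigma\times\{1\}$ lies on that side. The bipartite labeling of $G_\sigma$ is precisely the record of this coorientation at the state disks, with conventions fixed so that the positive normal points out of the plane of projection $P$ (into $H_+$) along a disk at a $+$ vertex, and into the lower region (into $H_-$) along a disk at a $-$ vertex. Indeed, this is why orientability of $S_\sigma$ is equivalent to bipartiteness of $G_\sigma$: each twisted band reverses the up/down sense of the normal, so adjacent disks receive opposite labels. Consequently the pushoff of the disk at a $+$ vertex sits in $H_+$ and that at a $-$ vertex sits in $H_-$, so for an arc running from a $+$ vertex to a $-$ vertex, $f(\gamma)$ runs from $H_+$ to $H_-$, as asserted in (1) and (2).

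The crux is to determine which $1$-handle carries $f(\gamma)$ from $H_+$ down to $H_-$, and this is where the $A$/$B$ label enters. I would work in a standard local model of the band: a rectangle carrying a single half-twist, with the direction of the half-twist fixed by the resolution convention of \reffig{Resolutions} and matching the checkerboard twisting of \reffig{Checkerboard}. The two regions of $P-G_\sigma$ adjacent to the edge are the \emph{left} region (carrying $T_\ell$) and the \emph{right} region (carrying $T_r$), taken relative to the direction of $\gamma$. As $\gamma$ traverses the band and is pushed to its positive side, the pushoff must leave the neighborhood of the $+$ disk in $H_+$, pass around one long edge of the twisted band, and re-enter near the $-$ disk in $H_-$; the side around which it passes is exactly the $1$-handle through which it descends. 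A direct inspection of the half-twisted band shows that for an $A$-resolution the positive pushoff wraps around the left edge, so $f(\gamma)$ descends through $T_\ell$, whereas for a $B$-resolution, whose twist has the opposite handedness, it wraps around the right edge and descends through $T_r$. This proves (1) and (2), and then (3) and (4) follow by the reversal symmetry noted above.

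I expect the main obstacle to be the handedness bookkeeping in this last step: one must simultaneously keep straight the direction of $\gamma$, the side on which the positive normal lies, the handedness of the half-twist distinguishing $A$ from $B$, and the resulting left/right choice, all consistently with the orientation conventions of \reffig{Resolutions} and \reffig{Checkerboard}. A carefully labeled picture of the twisted band together with its cooriented pushoff is effectively required to make this rigorous and to rule out a sign error that would swap $T_\ell$ and $T_r$ uniformly across all four cases.
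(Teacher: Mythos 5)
Your proposal is correct and follows essentially the same route as the paper: the endpoints of $f(\gamma)$ are read off from the $\pm$ labels (i.e.\ the coorientation of $S_\sigma$ at the state disks), and the choice of 1-handle is determined by a local inspection of the half-twisted band, which the paper carries out pictorially in its figure for the $+\to-$ cases and handles the $-\to+$ cases by the same direction-reversal symmetry you describe. Your write-up simply makes explicit (the reversal symmetry, the equivalence of bipartiteness with the coorientation record) what the paper leaves to a figure and its caption.
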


\begin{proof}
If $\gamma$ starts on a vertex labeled $+$ and runs to one labeled $-$, then $f(\gamma)$ begins at $H_+$ and ends at $H_-$. Similarly if $\gamma$ starts at a vertex labeled $-$ and runs to one labeled $+$, then $f(\gamma)$ begins at $H_-$ and ends at $H_+$. The 1-handles that $f(\gamma)$ meets are determined by the twisting of $S_\sigma$ along the twisted band attached at the crossing of the edge, with twisting in one direction for an edge labeled $A$ and in the other direction for an edge labeled $B$. The effects are shown in \reffig{MapF}.
\end{proof}

\begin{figure}
  \import{figures/}{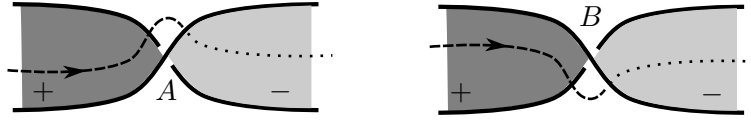}
  \caption{Effect of the map $f$ on the edges  of $G_{\sigma}$ for $\gamma$ running from $+$ to $-$. For $-$ to $+$, simply change the direction $\gamma$.}
  \label{Fig:MapF}
\end{figure}

Suppose $G_\sigma$ is a state graph that comes out of \refthm{Main}, namely it is planar with no cut-vertices. To determine whether the associated state surface $S_\sigma$ is a fiber, we choose generators for $\pi_1(G_\sigma,x)$ and consider their image under the Stallings map $\phi$.

Because $S_\sigma$ is orientable, $G_\sigma$ is bipartite, i.e.\ every closed loop has even length. Label each vertex either by $+$ or $-$ in an alternating fashion. Label the basepoint $x$ in $G_\sigma$ with $+$ to obtain a well-defined labeling of vertices of $G_\sigma$.

The graph $G_{\sigma}$ divides the plane into regions. The unbounded region is denoted by $R_0$. The  bounded ones are  denoted by $R_1, \dots, R_n$.

Corresponding to each region $R_j$, there is a 1-handle $T_j$ with its ends on the 0-handles $H_+$ and $H_-$, above and below the plane of projection, respectively. 
Since the basepoint $x$ of $G_\sigma$ is labeled $+$, its image $y=f(x)$ lies on $H_+$. For each bounded region $R_j$, $j=1, \dots,n$, define a curve $u_j$ based at $y$ as follows. The curve $u_j$ leaves the basepoint $y$ and runs monotonically down the 1-handle $T_j$ through the region $R_j$ to the 0-handle $H_-$. It then runs up the 1-handle $T_0$ through the unbounded region to connect to $H_+$, and back to the basepoint $y$. Then $\{u_1, \dots, u_n\}$ form a generating set for $\pi_1(S^3-G_\sigma, x)$.

Given a loop $u \subset S^3-G_{\sigma}$ based at $y$, its homotopy class in $\pi_1(S^3-G_{\sigma}, y)$ is given by a word in the letters $u_1,...,u_n$ as follows: Starting at $y$, move along $u$ acording to the choice of orientation. If $u$ crosses a region $R_i$ from above to below, then write the letter $u_i$. 
If $u$ crosses a region $R_i$ from below to above, then write the letter $u_i^{-1}$. Going around $u$ once gives a word which represents its homotopy class in $\pi_1(S^3-G_{\sigma},y)$. If the  region $R_0$ is crossed, then write no letters.

\begin{definition}\label{Def:Labeling}
The following procedure defines a new labeling on the graph $G_\sigma$, with labels the letters $u_i$. Direct each edge from $+$ to $-$. If the edge is labeled $A$, assign to the directed edge the letter $u_i$, where $R_i$ is the region to the left of the directed edge, or $1$ if the region to the left is unbounded. If the edge is labeled $B$, assign the letter $u_j$, where $R_j$ is the region to the right of the directed edge, or $1$ if the region to the right is unbounded. 
\end{definition}

\begin{proposition}\label{Prop:StallingsStar}
  Suppose that the state graph $G_\sigma$, embedded as the spine of an oriented state surface $S_\sigma$, is planar and has no cut-vertices. The effect of the Stallings map $\phi \from \pi_1(G_\sigma, x) \to \pi_1(S^3-G_\sigma, y)$ is as follows. A generator $\gamma_i$ of $\pi_1(G_\sigma,x)$ is represented by a word in the edges of $G_\sigma$. The image of $\gamma_i$ under $\phi$ is represented by a word in the letters $\{u_1^{\pm 1}, \dots, u_n^{\pm 1}\}$, where each edge $e$ contributes a letter as follows.

  Consider the labeling of \refdef{Labeling}. 
  \begin{enumerate}
  \item If $u_j$ is the label on $e$, and $\gamma_i$ runs along $e$ from $+$ to $-$, write $u_j$.
  \item If $u_j$ is the label on $e$, and $\gamma_i$ runs along $e$ from $-$ to $+$, write $u_j^{-1}$. 
  \item If $e$ is labeled $1$ and $\gamma_i$ runs along $e$, write no letters.    
  \end{enumerate}
\end{proposition}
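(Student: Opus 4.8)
The plan is to reduce everything to the local computation already carried out in \reflem{StallingsLocal}, combined with the word-reading convention for $\pi_1(S^3-G_\sigma, y)$ established just before \refdef{Labeling}. By \refthm{Algebraic}, the Stallings map $\phi$ sends a based loop $\gamma_i$ in $G_\sigma$ to the class of its image $f(\gamma_i)$ in $\pi_1(S^3-G_\sigma, y)$, after the chain of deformation retractions and inclusions described there. Since $\phi$ is a homomorphism and $\gamma_i$ is a concatenation of oriented edge-arcs of $G_\sigma$, it suffices to determine the contribution of each individual edge-arc and then multiply the contributions in order. So the whole argument is per-edge bookkeeping.

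First I would write $\gamma_i$ as a product $e^{(1)} e^{(2)} \cdots e^{(m)}$ of oriented edge-arcs. Because $S_\sigma$ is orientable, $G_\sigma$ is bipartite with vertices labeled alternately $+$ and $-$, so consecutive arcs meet at a vertex whose label determines the shared $0$-handle: by the proof of \reflem{StallingsLocal}, an arc traversed from $+$ to $-$ has $f$-image beginning on $H_+$ and ending on $H_-$, while an arc traversed from $-$ to $+$ has $f$-image beginning on $H_-$ and ending on $H_+$. Thus the arcs $f(e^{(k)})$ join up consistently on the $0$-handles, and $f(\gamma_i)$ is a genuine based loop at $y \in H_+$, to which the word-reading rule applies.

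Next I would apply \reflem{StallingsLocal} to each arc, which records exactly which $1$-handle $f(e^{(k)})$ traverses and in which direction, and then translate this into a letter via the convention: a downward passage through $T_j$ (crossing $R_j$ from above to below) contributes $u_j$, an upward passage through $T_j$ contributes $u_j^{-1}$, and any passage through the unbounded handle $T_0$ contributes nothing. For an edge traversed from $+$ to $-$, the two easy cases are immediate: \reflem{StallingsLocal}(1) gives a downward passage through the left-hand handle for an $A$-edge, hence $u_j$ with $R_j$ the region to the left, matching the $A$-clause of \refdef{Labeling} and case (1) of the proposition; \reflem{StallingsLocal}(2) gives the same conclusion using the right-hand region for a $B$-edge.

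The one point requiring care, which I expect to be the main obstacle, is reconciling the orientation conventions for arcs traversed from $-$ to $+$. There \reflem{StallingsLocal}(3),(4) name the handle "left" or "right" relative to the direction of $\gamma$, whereas \refdef{Labeling} fixes the reference direction once and for all as $+$ to $-$. Reversing the traversal direction interchanges left and right, so the handle called "right" relative to a $-\to+$ arc over an $A$-edge is precisely the handle lying to the "left" of that edge when it is directed $+\to-$, i.e.\ exactly the handle $T_j$ whose label $u_j$ is assigned by \refdef{Labeling}; moreover the passage is now upward (from $H_-$ to $H_+$), producing $u_j^{-1}$, in agreement with case (2) of the proposition. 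The check for $B$-edges is symmetric. Finally, whenever the relevant region is $R_0$, the passage runs through $T_0$ and contributes no letter, giving case (3). Concatenating the per-arc contributions in order yields the asserted word, completing the proof.
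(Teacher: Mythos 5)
Your proposal is correct and takes essentially the same route as the paper: the paper's proof is a one-line appeal to \reflem{StallingsLocal} together with the chosen generators and word-reading convention for $\pi_1(S^3-G_\sigma,y)$, which is precisely the per-edge bookkeeping you carry out. Your careful reconciliation of the left/right conventions for $-\to+$ traversals (where \reflem{StallingsLocal} measures sides relative to the traversal direction but \refdef{Labeling} fixes the $+\to-$ direction) is exactly the implicit content of that citation, spelled out.
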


\begin{proof}
This follows from \reflem{StallingsLocal} and our choice of generators of $\pi_1(G_\sigma, x)$ and $\pi_1(S^3-G_\sigma, y)$ defined above. 
\end{proof}

\begin{corollary}\label{Cor:AlgebraicFibering}
Suppose that the state graph $G_\sigma$, embedded in the spine of an oriented state surface $S_\sigma$, is planar with no cut-vertices, and let $\gamma_1, \dots, \gamma_n$ denote the generators of $\pi_1(G_\sigma, x)$ as above.  Then $S_\sigma$ is a fiber if and only if the group generated by $\phi(\gamma_1),\dots, \phi(\gamma_n)$ is isomorphic to $\pi_1(S^3-G_\sigma, y)$. \qed
\end{corollary}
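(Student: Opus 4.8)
The plan is to read \refcor{AlgebraicFibering} as an immediate consequence of \refthm{Algebraic} together with the explicit description of the Stallings map provided by \refprop{StallingsStar}. The statement asserts that $S_\sigma$ is a fiber if and only if the subgroup $\langle \phi(\gamma_1),\dots,\phi(\gamma_n)\rangle$ equals the full group $\pi_1(S^3-G_\sigma,y)$, so the whole content is bookkeeping about what it means for $\phi$ to be an isomorphism of free groups. First I would invoke \refthm{Algebraic}, which tells us that $S_\sigma$ is a fiber precisely when the Stallings map $\phi\from \pi_1(G_\sigma,x)\to \pi_1(S^3-G_\sigma,y)$ is an isomorphism. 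Since $\{\gamma_1,\dots,\gamma_n\}$ is chosen as a generating set (in fact a free basis) for $\pi_1(G_\sigma,x)$, the image of $\phi$ is exactly the subgroup generated by $\phi(\gamma_1),\dots,\phi(\gamma_n)$.

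The key observation I would then make is that both groups are free of the same finite rank. Indeed, $G_\sigma$ is a connected finite graph, so $\pi_1(G_\sigma,x)$ is free; with $G_\sigma$ planar, $2$-connected (no cut-vertices), the number of bounded regions $n$ equals the first Betti number, and this $n$ is exactly the number of generators $\gamma_i$. On the target side, the complement $S^3-G_\sigma$ deformation retracts onto the handlebody described just before \reflem{StallingsLocal}, and the curves $u_1,\dots,u_n$ are stated there to be a free generating set of $\pi_1(S^3-G_\sigma,y)$, so this group is also free of rank $n$. Thus $\phi$ is a homomorphism between free groups of equal finite rank.

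With the ranks matched, the proof reduces to the standard fact that a surjective endomorphism of a finitely generated free group is an isomorphism (free groups are Hopfian). So $\phi$ is an isomorphism if and only if it is surjective, and surjectivity is precisely the condition that $\langle \phi(\gamma_1),\dots,\phi(\gamma_n)\rangle = \pi_1(S^3-G_\sigma,y)$. Combining this equivalence with \refthm{Algebraic} yields exactly the statement of the corollary. The role of \refprop{StallingsStar} here is to make the subgroup $\langle \phi(\gamma_i)\rangle$ computable: it gives the words in the $u_j^{\pm 1}$ that one actually checks generate the target, so the criterion is effective rather than abstract.

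The main obstacle is not the logic, which is short, but making sure the two auxiliary facts used implicitly are clean: that the $u_i$ genuinely form a free basis of rank equal to the number of bounded regions (this depends on the handle decomposition of $S^3-S_\sigma$ and the $2$-connectedness hypothesis, which rules out degeneracies and guarantees the stated Euler-characteristic count), and that the $\gamma_i$ form a basis of $\pi_1(G_\sigma,x)$ of the same rank. Once the rank equality is secured, the Hopfian property does the rest, so I expect the write-up to be very short; indeed the corollary is stated with a \texttt{\char`\\qed} and no separate proof, signalling that the authors regard it as an immediate restatement of \refthm{Algebraic} via surjectivity of $\phi$.
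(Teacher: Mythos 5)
Your proof is correct and takes essentially the same route as the paper: \refcor{AlgebraicFibering} is left as immediate there precisely because it is \refthm{Algebraic} combined with the Hopfian property of finitely generated free groups (isomorphism $\Leftrightarrow$ surjectivity once the ranks match), which is exactly the reasoning the paper itself makes explicit in the proof of \refcor{StallingsFold}. Your decision to read ``isomorphic to $\pi_1(S^3-G_\sigma,y)$'' as ``equal to $\pi_1(S^3-G_\sigma,y)$'' (i.e.\ surjectivity of $\phi$) is the correct interpretation, since a proper subgroup of a free group of rank $n$ can be abstractly isomorphic to the whole group, so the literal reading would not characterize fibering.
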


\begin{example}\label{Example:FiberNotTree}
Consider the link in \reffig{NewExample}.
\begin{figure}[h]
\import{figures/}{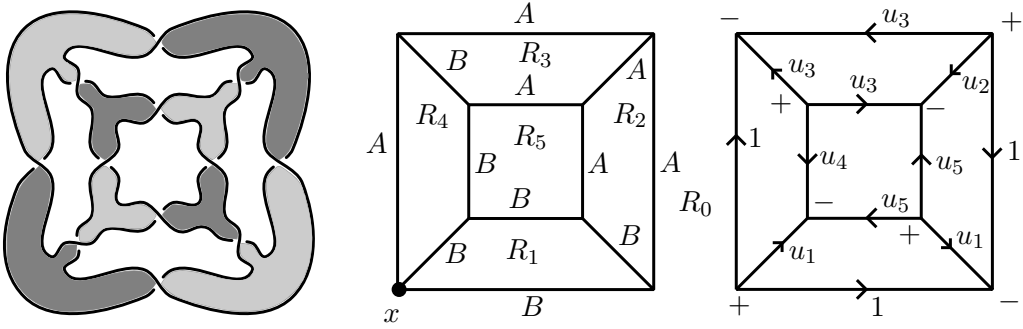}
\caption{A checkerboard state surface that is a fiber, and its associated state graph.}
\label{Fig:NewExample}
\end{figure}
The base point $x$ is indicated. The labeling of \refdef{Labeling} is shown on the far right. Let $\{\gamma_1, \dots, \gamma_5\}$ denote generators of $\pi_1(G_\sigma,x)$ as follows. The curve $\gamma_i$ begins at $x$, traverse a path $\beta_i$ to the boundary of the region $R_i$, runs counter clockwise around $R_i$, then traverses the path $\beta_i^{-1}$ back to $x$. More specifically, we choose $\beta_2$ to be the arc given by the lower horizontal edge starting at $x$, we choose $\beta_3$ to be the arc given by the lower horizontal edge followed by the vertical right-most edge, and we choose $\beta_5$ to be the diagonal edge with endpoint on $x$. The arcs $\beta_1$ and $\beta_4$ are trivial. Following the recipe of \refprop{StallingsStar}, we obtain:
\[
\begin{cases}
\phi(\gamma_1)=u_1^{-1}u_5u_1^{-1}\\
\phi(\gamma_2)=u_2u_5^{-1}u_1\\
\phi(\gamma_3)= u_3u_2^{-1}\\
\phi(\gamma_4)= u_1u_4^{-1}u_3\\
\phi(\gamma_5)=u_1u_3^{-1}u_4u_1^{-1}
\end{cases}
\] 
It is possible to see directly that this map is an isomorphism; its inverse given by:
\[
\begin{cases}
\phi^{-1}(u_1)=\gamma_5\gamma_4\\
\phi^{-1}(u_2)=\gamma_2\gamma_1\gamma_5\gamma_4\\
\phi^{-1}(u_3)=\gamma_3\gamma_2\gamma_1\gamma_5\gamma_4\\
\phi^{-1}(u_4)=\gamma_3\gamma_2\gamma_1\gamma_5^2\gamma_4\\
\phi^{-1}(u_5)=\gamma_5\gamma_4\gamma_1\gamma_5\gamma_4
\end{cases}
\]
Therefore the state surface shown is a fiber for the corresponding link. 
\end{example}

\subsection{Stallings folding}

In \cite{StallingsFold}, Stallings describes a procedure that can determine whether a homomorphism of free groups is an isomorphism. See also Kapovich and Myasnikov~\cite{KapovichMyasnikov}. 

\begin{definition}
  An \emph{edge folding} of a directed labeled graph $\Gamma$ is a new directed labled graph obtained by identifying two edges $e_1$ and $e_2$ of $\Gamma$ that share the same vertex $v$, and have the same label and the same direction incident to $v$. A directed labeled graph is \emph{folded} if at each vertex $v$ there is at most one edge with a given label and incidence starting (or terminating) at $v$. 
\end{definition}

A folded graph admits no edge foldings.

Fix a basepoint on a directed labeled graph. The fundamental group of the graph is determined by concatination of paths; group elements are determined by writing words in the labels traversed by a path, where if an edge $e$ has label $a$ and is traversed in the direction of $e$, we write $a$, and if in the opposite direction, we write $a^{-1}$. Two words are \emph{freely reduced} if two labels $a$ and $a^{-1}$ never appear consecutively. Note this is a way of representing the fundamental group as a subgroup of a free group; the labeled graph is a covering space corresponding to the subgroup. We call this the \emph{group defined on the graph}. 

The following is straightforward; see \cite{KapovichMyasnikov}. 

\begin{proposition}\label{Prop:StallingsFold}
  Let $\Gamma_1$ and $\Gamma_2$ be two directed labeled graphs. If $\Gamma_2$ is obtained from $\Gamma_1$ by an edge folding, then the two groups defined on the graphs are isomorphic. If $\Gamma_2$ is folded, then for any loop $\ell$ in $\Gamma_2$, the corresponding word is freely reduced. 
\end{proposition}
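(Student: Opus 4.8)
The plan is to prove the slightly stronger statement that an edge folding leaves the group defined on the graph unchanged as a subgroup of the ambient free group (which in particular gives the asserted isomorphism), and then to treat the folded-graph statement separately. Throughout, let $X$ denote the set of labels, let $F=F(X)$ be the free group on $X$, and let $R_X$ be the rose with one vertex and one oriented loop for each label. Reading labels along edge-paths is exactly the label-preserving graph morphism $\rho_i\from\Gamma_i\to R_X$, so the group defined on $\Gamma_i$ is the image $H(\Gamma_i):=\rho_{i*}(\pi_1(\Gamma_i))\le F$. An edge folding is a label- and basepoint-preserving quotient map $q\from\Gamma_1\to\Gamma_2$ collapsing two edges $e_1,e_2$ (with common vertex $v$, common label $a$, common incidence) to a single edge, and identifying their opposite endpoints $w_1,w_2$ to a single vertex $w$; in particular $\rho_1=\rho_2\circ q$.

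For the first assertion, one inclusion is immediate: since $q$ carries basepoint to basepoint and preserves labels, a loop in $\Gamma_1$ maps to a loop in $\Gamma_2$ reading the identical word, so $H(\Gamma_1)\subseteq H(\Gamma_2)$. The substance is the reverse inclusion, and this is the step I expect to be the main obstacle. Given an edge-loop $\ell$ in $\Gamma_2$ at the basepoint, I would lift it edge by edge to a path in $\Gamma_1$ reading the same word. The only ambiguity or failure occurs at the folded vertex $w$, where a single preimage ($w_1$ or $w_2$) need not see every edge in the star of $w$. The key device is that the detour $\bar{e}_1 e_2$ running $w_1\to v\to w_2$ reads $a^{-1}a$ and hence contributes the trivial word; inserting such a detour lets me reposition between $w_1$ and $w_2$ without changing the word read, so the lift can always be continued and finally closed up into a loop of $\Gamma_1$ reading the word of $\ell$. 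This yields $H(\Gamma_2)\subseteq H(\Gamma_1)$, hence equality. It is worth noting that when $w_1=w_2$ already (the fold of parallel edges) the rank of $\pi_1$ of the underlying graph drops by one; the point of working with the images $H(\Gamma_i)$ in $F$ is that the loop being killed, $e_1\bar{e}_2$, reads $aa^{-1}$, the trivial word, so the subgroup of $F$ — the group actually defined on the graph — is unaffected.

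For the second assertion I read ``loop'' as a reduced edge-path, that is, one that never traverses an edge immediately followed by its reverse, since these are the paths representing group elements and an arbitrary backtracking loop plainly need not read a reduced word. I would argue by contradiction: if the word of a reduced loop $\ell$ in the folded graph $\Gamma_2$ were not freely reduced, it would contain a cancelling pair $a^{\varepsilon}a^{-\varepsilon}$ coming from two consecutive edges $g,g'$ of $\ell$ meeting at a vertex $u$. Tracking directions, both $g$ and $g'$ are labeled $a$ and have the same incidence at $u$ (both terminating at $u$ when $\varepsilon=+1$, both originating at $u$ when $\varepsilon=-1$). The definition of a folded graph forbids two distinct such edges, so $g=g'$, and then $\ell$ traverses $g$ and immediately $\bar{g}$, a backtrack contradicting reducedness. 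Hence the word is freely reduced, which is exactly the stated conclusion.

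Finally, I would record that equality of subgroups gives the isomorphism claimed, and that the two facts together are precisely what is needed to run the folding algorithm: repeatedly folding an initial graph does not change the group it defines, and once no fold applies the resulting folded graph reads each of its reduced loops as a freely reduced word, so the subgroup can be read off combinatorially. The only genuinely delicate point is the reverse inclusion in the first part; the remaining steps are routine consequences of the definitions of folding and of a folded graph.
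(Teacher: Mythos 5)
Your proof is correct. Note that the paper itself offers no argument here: it declares the proposition ``straightforward'' and defers to the cited work of Kapovich and Myasnikov, so there is no internal proof to compare against. Your argument is essentially the standard one from that literature: treat the group defined on a graph as the image $\rho_*(\pi_1(\Gamma))$ in the free group $F(X)$ under the label-reading morphism to the rose, show a fold preserves this image (the easy inclusion from $\rho_1=\rho_2\circ q$, the reverse inclusion by lifting paths and inserting the trivial-word detour $\bar e_1 e_2$ at the folded vertex), and derive the second assertion from the defining property of a folded graph.

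Two of your interpretive choices are not pedantry but genuinely necessary, and you handle both correctly. First, working with the image subgroup in $F$ rather than with $\pi_1$ of the underlying graph is essential: a fold of two parallel edges with the same endpoints drops the rank of $\pi_1(\Gamma)$, so the statement ``the two groups are isomorphic'' is only sensible (and only true in the intended sense) for the groups of label-words, exactly as the paper's definition of ``group defined on the graph'' specifies; your observation that the killed loop $e_1\bar e_2$ reads the trivial word is the right way to see this. Second, the claim about folded graphs is literally false for arbitrary loops (any backtracking loop reads an unreduced word), so ``loop'' must mean a reduced (immersed) edge-path; with that reading your contradiction argument --- a cancelling pair forces two edges with the same label and incidence at a vertex, hence equal edges, hence a backtrack --- is exactly right. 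The proof is complete.
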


Starting with any graph, perform a sequence of edge foldings, strictly reducing the number of edges, until the process terminates in a folded graph. Touikan showed that there is an algorithm to fold a graph that runs in almost linear time~\cite{Touikan}.

\begin{corollary}\label{Cor:StallingsFold}
  Suppose $G_\sigma$ is a planar state graph with no cut vertices that is embedded as the spine of an oriented state surface $S_\sigma$. Suppose $\pi_1(G_\sigma)$ is the free group on $n$ letters. Let $\Gamma$ be a directed labeled graph obtained from $G_\sigma$ by the process of \refdef{Labeling}. Let $\Gamma'$ denote the result of $\Gamma$ after folding. Then the state surface $S_\sigma$ is a fiber if and only if $\Gamma'$ is a rose with $n$ petals. 
\end{corollary}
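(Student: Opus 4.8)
The plan is to read the algebraic fibering criterion of \refcor{AlgebraicFibering} directly off the labeled graph, exploit the folding–invariance of \refprop{StallingsFold}, and then settle a purely combinatorial statement about folded graphs that define a full free group.

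First I would compare ranks. Since $G_\sigma$ is planar, connected, and has no cut-vertices, its first Betti number equals the number of bounded complementary regions; thus the hypothesis $\pi_1(G_\sigma)\cong F_n$ says exactly that there are $n$ bounded regions $R_1,\dots,R_n$. Correspondingly $S^3-G_\sigma$ is the handlebody with $0$-handles $H_+,H_-$ and one $1$-handle per region, so $\pi_1(S^3-G_\sigma,y)$ is free of rank $n$ on $u_1,\dots,u_n$. The Stallings map $\phi$ is therefore a homomorphism between two free groups of the same rank $n$; since a surjection between free groups of equal finite rank is automatically an isomorphism (free groups are Hopfian), $\phi$ is an isomorphism if and only if $\phi(\gamma_1),\dots,\phi(\gamma_n)$ generate $\pi_1(S^3-G_\sigma,y)$. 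By \refcor{AlgebraicFibering} this surjectivity is precisely the condition that $S_\sigma$ be a fiber.

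Next I would identify $\operatorname{im}\phi$ with the group defined on $\Gamma$. By \refprop{StallingsStar}, the word in $u_1^{\pm1},\dots,u_n^{\pm1}$ representing $\phi(\gamma_i)$ is precisely the word read off by traversing $\gamma_i$ in $\Gamma$ under the labeling of \refdef{Labeling}, with edges labeled $1$ contributing nothing. Hence $\langle\phi(\gamma_1),\dots,\phi(\gamma_n)\rangle=\operatorname{im}\phi$ is the group defined on $\Gamma$, and by \refprop{StallingsFold} folding does not change it, so the group defined on $\Gamma'$ is again $\operatorname{im}\phi$. Combining with the previous step, $S_\sigma$ is a fiber if and only if the group defined on $\Gamma'$ is all of $F(u_1,\dots,u_n)$.

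It remains to prove the combinatorial equivalence: a folded labeled graph on the alphabet $u_1,\dots,u_n$ defines the full free group if and only if it is a rose with $n$ petals. One direction is quick: ignoring the trivial label-$1$ loops, a folded rose with $n$ petals must carry $n$ distinct nontrivial labels (else a fold would be available), so all of $u_1,\dots,u_n$ occur and the defined group is everything. The converse is the main obstacle, and its difficulty is concentrated in the edges labeled $1$, which are not honest generators but identity ($\epsilon$-)edges: Stallings' framework applies cleanly only after contracting them, and I must argue that this contraction reduces $\Gamma'$ to a folded graph over $\{u_1,\dots,u_n\}$ with the same defined group and does not spuriously inflate the first Betti number (equivalently, that the label-$1$ edges carry no essential loop—a point that is genuinely delicate here, since in a planar $2$-connected graph the outer boundary is a cycle, so one must use that only some outer edges receive the label $1$). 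Granting this, the labeling is injective on $\pi_1$ of the honest folded graph by \refprop{StallingsFold}, so its first Betti number equals $\operatorname{rank}(\operatorname{im}\phi)$, which is $n$ when $\operatorname{im}\phi=F(u_1,\dots,u_n)$; trimming valence-one vertices then exhibits it as the Stallings core of the full group, which is uniquely the rose $R_n$. The final point on which I expect to spend the most effort is showing that no trimming is in fact required—that folding the reduced, $2$-connected $G_\sigma$ never strands a hanging tree—so that $\Gamma'$ is literally $R_n$ rather than $R_n$ with superfluous arcs attached; the alternative is to absorb this $\epsilon$-reduction and trimming into the meaning of ``after folding.''
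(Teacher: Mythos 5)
Your proposal is correct and takes essentially the same route as the paper, whose entire proof consists of the observations that $\Gamma$ encodes $\operatorname{im}\phi$, that folding preserves this group (so that $\Gamma'$ is a rose with $n$ petals if and only if $\phi$ is surjective), and that the Hopfian property of free groups together with \refthm{Algebraic} upgrades surjectivity to the fibering criterion. The two points you flag as delicate are simply passed over in silence by the paper, but both can be closed with tools you already have in hand: in the fiber direction $\phi$ is injective (again by Hopficity), so no embedded cycle of $\Gamma$ --- in particular no cycle of label-$1$ edges --- can read the trivial word, making the $\epsilon$-contraction harmless; and whenever the defined group is all of $F(u_1,\dots,u_n)$, each one-letter word $u_i$ must be read by a single loop-edge at the basepoint, so all $n$ labels occur there in both directions, foldedness then forbids any additional edge end at that vertex, and connectedness forces $\Gamma'$ to be literally the rose with $n$ petals --- no trimming required.
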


\begin{proof}
The graph $\Gamma$ encodes the image of Stallings' map $\phi\from\pi_1(G_\sigma)\to\pi_1(S^3-G_\sigma)$. 
Stallings folding implies that the graph $\Gamma'$ is a rose with $n$ petals if and only if $\phi$ is surjective.
Because free groups are Hopfian, this holds if and only if $\phi$ is an isomorphism. The result follows from \refthm{Algebraic}.
\end{proof}

\reffig{Folding} shows the folding process applied to the directed labeled graph of \reffig{NewExample}. Note the fact that the state surface of that example is a fiber now follows from \refcor{StallingsFold}. 

\begin{figure}
  \import{figures/}{Folding.pdf_tex}
  \caption{The process of folding applied to \refexamp{FiberNotTree}.}
  \label{Fig:Folding}
\end{figure}


\section{Applications}\label{Sec:Applications}

In this section, we consider some simple graphs and completely classify when they correspond to state surfaces that are fibers.

\subsection{The state graph or reduced state graph is a tree}
This is the case of \refcor{Tree}. We have seen that the state surface is always a fiber. 

\subsection{The reduced state graph decomposes into cycles}

\begin{lemma}\label{Lem:Cycle}
Suppose $K$ is a link with state surface $S_\sigma$. Suppose that the reduced state graph corresponding to $S_\sigma$ is a cycle. Then $S_\sigma$ is a fiber if and only if there are exactly two more edges in the cycle labeled $A$ than $B$, or vice versa.
\end{lemma}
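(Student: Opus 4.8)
The plan is to reduce everything to a homomorphism $\mathbb{Z}\to\mathbb{Z}$ and compute a single integer. Since $S_\sigma$ is orientable, $G_\sigma$ is bipartite, so the cycle has even length $2k$ and its vertices alternate between the labels $+$ and $-$. A simple cycle embedded in the plane bounds exactly one bounded region $R_1$, so in the notation of \refdef{Labeling} there is a single generator $u_1$, and $\pi_1(S^3-G_\sigma,y)=\langle u_1\rangle\cong\mathbb{Z}$; likewise $\pi_1(G_\sigma,x)=\langle\gamma_1\rangle\cong\mathbb{Z}$, generated by the loop $\gamma_1$ running once around the cycle. By \refcor{AlgebraicFibering}, $S_\sigma$ is a fiber if and only if the Stallings map $\phi$ is an isomorphism, i.e.\ if and only if $\phi(\gamma_1)=u_1^{\pm1}$. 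Thus the statement comes down to computing the exponent $N$ with $\phi(\gamma_1)=u_1^{N}$ and showing that $N=\pm1$ is equivalent to the stated edge count.

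To compute $\phi(\gamma_1)$, I would read off the word edge-by-edge using \refprop{StallingsStar} and the labeling of \refdef{Labeling}. Orient $\gamma_1$ so that $R_1$ lies to its left. Because the vertices alternate in sign, consecutive edges are traversed alternately in the $+\to-$ direction and the $-\to+$ direction, with exactly $k$ edges in each class. A short left/right analysis then gives the contribution of each edge: an $A$-edge traversed $+\to-$ has $R_1$ on its left and contributes $u_1$; a $B$-edge traversed $-\to+$ has $R_1$ on its right and contributes $u_1^{-1}$; in the two remaining cases the relevant region is the unbounded $R_0$, whose label is $1$, so the edge contributes nothing. Since the target group $\langle u_1\rangle$ is abelian, only the total exponent matters, giving $\phi(\gamma_1)=u_1^{N}$ with $N=a_+-b_-$, where $a_+$ is the number of $A$-edges traversed $+\to-$ and $b_-$ the number of $B$-edges traversed $-\to+$.

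The final step is a bookkeeping identity. Writing $a_\pm,b_\pm$ for the number of $A$- (resp.\ $B$-) edges traversed in each of the two directions, the alternation of vertices gives $a_++b_+=k=a_-+b_-$, whence $a_+-b_-=a_--b_+$. Therefore $\#A-\#B=(a_++a_-)-(b_++b_-)=(a_+-b_-)+(a_--b_+)=2N$. Consequently $N=\pm1$ if and only if $\#A-\#B=\pm2$, which is exactly the condition that the cycle have two more $A$-edges than $B$-edges or vice versa. This proves the equivalence.

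I expect the only delicate point to be the orientation bookkeeping in the middle paragraph: keeping straight, for each of the four combinations of edge label and traversal direction, which of $R_0,R_1$ sits to the left versus the right of the directed edge, and hence which case of \refprop{StallingsStar} applies. I would pin this down once against \reffig{MapF} and verify it on the smallest cases — a $2$-cycle with labels $A,B$, giving $N=0$ and correctly non-fibered in agreement with \reflem{MultipleEdges}, and a $4$-cycle with labels $A,A,A,B$, giving $N=1$ and hence a fiber — before trusting the general count.
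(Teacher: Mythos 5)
Your core strategy is the paper's: both arguments reduce the lemma, via \refprop{StallingsStar}, to checking whether a Stallings map $\ZZ\to\ZZ$ sends the generator to $\pm1$. The difference is in how the exponent is computed. The paper first invokes \reflem{Consecutive} to collapse adjacent edges with distinct labels, so that the remaining cycle has all edges carrying the same label; it then has $2r$ edges with $2r$ equal to the difference between the number of $A$-edges and $B$-edges of $G$, and the exponent is trivially $\pm r$. You instead keep the mixed-label cycle and do the signed count directly: with $\gamma_1$ oriented so that $R_1$ lies to its left, an $A$-edge traversed from $+$ to $-$ and a $B$-edge traversed from $-$ to $+$ contribute $u_1$ and $u_1^{-1}$ respectively, while in the other two cases the relevant region in \refdef{Labeling} is the unbounded one, contributing nothing. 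That bookkeeping is consistent with \reffig{MapF}, and your identity $\#A-\#B=2N$ follows correctly from the fact that exactly $k$ of the $2k$ edges are traversed in each direction. Your version buys a purely combinatorial count on the fixed graph, with no need for the untwisting isotopy of \reflem{Consecutive}; the paper's version buys a one-line exponent computation at the cost of that preliminary simplification.

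There is one gap, small but real: the hypothesis is that the \emph{reduced} state graph is a cycle, not that $G_\sigma$ itself is. The actual state graph of $S_\sigma$ may contain parallel edges with the same label, in which case it is not a cycle, and \refcor{AlgebraicFibering} cannot be applied to $S_\sigma$ with ``$G_\sigma$'' taken to be the cycle. You need the opening move of the paper's proof: by \reflem{MultipleEdges}, $S_\sigma$ is a fiber if and only if the checkerboard surface induced by the reduced cycle graph is a fiber, since the deleted parallel edges correspond to Murasugi summands that are Hopf bands. Once you insert that sentence and run your computation on the reduced graph's checkerboard surface (which is orientable: bipartiteness of $G_\sigma$ passes to the reduced subgraph, so the cycle has even length and the surface is an annulus), the proof is complete.
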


\begin{proof}
  Let $G$ denote the reduced state graph; the fact that $G$ is a cycle means that the (checkerboard) state surface $S$ induced by $G$ is an annulus. The original state surface $S_\sigma$, whose state graph may have multiple parallel edges with the same label, is obtained by taking Murasugi sums of Hopf bands with the annulus $S$, as in \reflem{MultipleEdges}. The graph $G$ is a string of consecutive edges.  Untwist $S$ as in \reflem{Consecutive}, obtaining a new graph $G'$ for which all edges have the same label. Now consider the Stallings map $\phi$. Because the surface is orientable, $G'$ has an even number of edges. We have $\pi_1(G) \cong \pi_1(G') \cong \ZZ$ and $\pi_1(S^3-G)\cong\pi_1(S^3-G') \cong \ZZ$. By \refprop{StallingsStar}, the image of the generator under $\phi$ is given by $\phi(\gamma)=u^{\pm r}$, where $2r$ is the number of edges in $G'$, and the sign is positive or negative depending on whether the labels of $G'$ are $A$ or $B$. Note $\phi$ is an isomorphism if and only if $r=1$. Thus the original state surface is a fiber in this case if and only if $G'$ has exactly two edges, meaning there are exactly two more edges of the reduced graph $G$ that are labeled $A$ than those labeled $B$, or vice versa.
\end{proof}

Note that the unbounded checkerboard surfaces for pretzel links fall into the class of the surfaces satisfying the hypotheses of \reflem{Cycle}. Gabai treats these as type~III surfaces in \cite{Gabai:Fiber}. Gabai has more conditions on when these links fiber. However, note that in the cases distinct from \reflem{Cycle}, the fibered surface is not the given state surface.

\subsection{Graphs the shape of a theta and pretzel links}

A next simple class of graphs to consider are those that have the shape of a $\Theta$: that is, there are three collections of consecutive edges meeting two vertices; we call each of the three collections of consecutive edges a \emph{strand}. There will be $p_1$ vertices on the first strand, $p_2$ on the second, and $p_3$ on the third, where either all $p_i$ are even or all are odd to ensure the graph is bipartite. We may also assume, after reducing, that all edges on each strand have the same label. The induced checkerboard state surface will be the bounded checkerboard surface of a pretzel knot with three strands, and $p_i$ crossings on each strand. 
In fact we may generalize to $n$ strands: each strand has $p_i$ vertices, all edges on a strand have the same label, and all the $p_i$ are even or all are odd. Call such a graph a \emph{generalized theta graph}. The induced checkerboard surfaces are bounded checkerboard surfaces for pretzel links. 

The fibering of such checkerboard surfaces has been completely classified. When $n=3$, this follows from work of Crowell and Trotter~\cite{CrowellTrotter}. For other values of $n$, partial results were obtained by Parris~\cite{Parris}, and independently by Goodman and Tavares~\cite{GoodmanTavares}, and Kanenobu~\cite{Kanenobu}. Their results are summarized and generalized by Gabai~\cite{Gabai:Fiber}.
The full result is the following.

\begin{theorem}[Bounded checkerboard surfaces of pretzel links]
Let $p_1, \dots, p_n$ be nonzero integers. The pretzel link determined by $(p_1, \dots, p_n)$ crossings has a bounded checkerboard surface that is a fiber if and only if one of the following holds:
\begin{enumerate}
\item each $p_i=\pm 1$ or $\mp 3$ and some $p_i=\pm 1$.
\item $(p_1, \dots, p_n) = \pm (2, -2, 2, -2, \dots, 2, -2, \ell)$ for $\ell\in\ZZ$ (here $n$ is odd).
\item $(p_1, \dots, p_n) = \pm (2, -2, 2, -2, \dots, -2, 2, -4)$ (here $n$ is even).
\end{enumerate}
\end{theorem}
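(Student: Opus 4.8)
The plan is to reduce the statement to the already-established classification of fibered pretzel surfaces, after checking that the \emph{bounded checkerboard surface} of $P(p_1,\dots,p_n)$ really is the surface those classifications analyze. First I would identify the geometry: a generalized theta graph with two vertices and $n$ strands, strand $i$ carrying $p_i$ same-labeled edges, is exactly the spine of the two-disk pretzel surface, that is, two disks joined by $n$ twisted bands, where band $i$ carries $p_i$ half-twists whose sense records the label $A$ or $B$. By \reflem{MultipleEdges} and \reflem{Consecutive} the only reductions available on a single strand collapse consecutive edges of \emph{distinct} label, so after reduction each strand retains a signed twist count $\pm p_i$, and the reduced graph is still a generalized theta graph with the same data $(p_1,\dots,p_n)$ up to sign. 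This is precisely the bounded checkerboard surface in the sense used here, so \refthm{Main} lets us work entirely with this planar, cut-vertex-free graph.

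Having matched the surface, I would invoke the classifications of Crowell and Trotter \cite{CrowellTrotter} for $n=3$, and of Parris \cite{Parris}, Goodman and Tavares \cite{GoodmanTavares}, and Kanenobu \cite{Kanenobu} for general $n$, as unified by Gabai \cite{Gabai:Fiber}, and read off the three families (1)--(3). The main obstacle is bookkeeping rather than topology: the cited references use several incompatible normalizations for the pretzel surface (which checkerboard surface is ``bounded,'' the sign convention on each $p_i$, and whether twists are counted as half-twists or full twists), so the genuine content is a careful dictionary showing that their fibering criteria reassemble into exactly the stated cases. I would pay particular attention to the overall sign $\pm$ and to the alternating $(2,-2,2,-2,\dots)$ patterns in (2) and (3), verifying that these are the surfaces Gabai builds by iterated Hopf-band plumbing and that the trailing $\ell$ (odd $n$) and $-4$ (even $n$) come out correctly; this is where an error in a convention would silently shift the answer.

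As an internal check, and in keeping with the machinery of this paper, one could instead apply \refcor{StallingsFold} directly. The fundamental group $\pi_1(G_\sigma)$ of a generalized theta graph is free of rank $n-1$, generated by loops $\gamma_1,\dots,\gamma_{n-1}$ around the $n-1$ bounded regions lying between consecutive strands. Using \refprop{StallingsStar} I would compute each image $\phi(\gamma_j)$: exactly as in the cycle computation of \reflem{Cycle}, a strand of $|p_i|$ same-labeled edges contributes a power $u^{\pm(\,|p_i|/2\,)}$ (respectively a power tracking the parity and sign of $p_i$) of the region-letter on the appropriate side, so that $\phi(\gamma_j)$ is a short word built from the powers coming from strands $j$ and $j+1$. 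One then folds the resulting labeled graph and asks when the fold is a rose on $n-1$ petals. I expect this route to reproduce (1)--(3), but to become genuinely delicate for large $|p_i|$ and large $n$, where the folding must track long runs of a single letter and the cancellations among overlapping generators; that combinatorial explosion is precisely the difficulty the cited authors overcame, and it is the reason I would present the citation argument as the primary proof and reserve the Stallings computation as a verification in small cases such as $n=3$.
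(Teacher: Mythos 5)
Your proposal is correct and takes essentially the same approach as the paper: the paper states this theorem without any proof of its own, presenting it purely as a summary of the cited classifications (Crowell--Trotter for $n=3$; Parris, Goodman--Tavares, and Kanenobu in general, as unified by Gabai), after having identified the generalized theta graph's checkerboard surface with the bounded pretzel surface in the preceding paragraph --- which is exactly your primary argument. Your supplementary Stallings-folding check is extra machinery the paper does not carry out for this theorem, but the core route is identical.
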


\begin{corollary}\label{Cor:ThetaGraph}
Suppose a reduced state graph has the shape of a generalized theta graph. Reduce further to remove consecutive edges without the same label. Then the associated checkerboard surface is a fiber if and only if one of the following holds:
\begin{enumerate}
\item Each strand has $0$ or $2$ vertices, and some strand has $0$ vertices. All strands with $0$ vertices are labeled $A$ or are all labeled $B$; strands with $2$ vertices are exactly the opposite (all $B$ or all $A$). 
\item There are an even number of strands, each strand except the last has $1$ vertex, all strands except the last alternate being labeled $A$ and $B$, and the final strand has any number of vertices with any label $A$ or $B$ on all edges of the strand.
\item There are an odd number of strands, each strand except the last has $1$ vertex, all strands alternate being labeled $A$ and $B$, and the final strand has three vertices.
\end{enumerate}
\end{corollary}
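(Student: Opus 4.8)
The plan is to prove the statement entirely as a translation of the preceding classification theorem for bounded checkerboard surfaces of pretzel links; all of the topological content is already supplied by that theorem, together with \refthm{Main}, \reflem{MultipleEdges}, and \reflem{Consecutive}. So the task is to build an exact dictionary between a generalized theta graph and the pretzel parameters $(p_1,\dots,p_n)$, and then read off each case.

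First I would record the combinatorial dictionary. A generalized theta graph consists of two hub vertices joined by $n$ strands, and it is precisely the reduced state graph, embedded as a spine, of the bounded checkerboard surface of the corresponding $n$-strand pretzel link; hence the two notions of ``fiber'' agree and the theorem applies directly. On strand $i$ the $v_i$ interior vertices are the bigon state disks of one twist region, so the strand carries $v_i+1$ equally labeled edges and the twist region has $v_i+1$ crossings. Therefore $|p_i| = v_i + 1$: a strand with $0$, $2$, or $3$ vertices corresponds to a twist region with $1$, $3$, or $4$ crossings, which are exactly the sizes that occur in the three cases of the theorem.

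Next I would fix the correspondence between the geometric label ($A$ or $B$, recorded by the Kauffman state) and the sign of $p_i$, which is the delicate part. Because a fiber is orientable, \refthm{Main} lets me restrict to bipartite theta graphs, so all $v_i$ share a common parity and hence all $|p_i|$ share a common parity; this is exactly the parity constraint visible in each of the theorem's three normal forms. After orienting the boundary link so that the surface becomes a Seifert surface, I would follow how the orientation propagates across the two hub disks and around each strand in order to convert the twisting label into the signed parameter $p_i$. I expect this to show that a constant label on the odd-length strands of the first case produces a constant sign on the size-one strands (with the size-three strands forced to the opposite sign), while alternating labels produce the alternating blocks $2,-2,2,-2,\dots$ of the second and third cases. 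I would also invoke the dihedral symmetries of a pretzel link --- cyclic rotation of the strands, reversal, and the global mirror responsible for the $\pm$ in the theorem --- to normalize any admissible pattern.

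With the dictionary in hand, the proof concludes by substituting $|p_i| = v_i + 1$ and the sign rule into each of the three clauses of the theorem, checking that the resulting constraint on vertex counts and labels is exactly the corresponding clause of the corollary, and conversely carrying each graph-side normal form back to one of the three pretzel-side forms. I expect the main obstacle to be precisely this middle step: reconciling the sign conventions and, in particular, the parity of the strand count $n$ between the theorem's explicit alternating strings and the corollary's vertex/label descriptions, and verifying that the global mirror together with the cyclic and reflective symmetries of the pretzel accounts for every sign and ordering ambiguity, so that the three graph-side normal forms correspond precisely to the three pretzel-side normal forms. Everything outside this reconciliation is routine substitution.
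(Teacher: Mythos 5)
Your overall strategy is exactly the paper's: the paper states \refcor{ThetaGraph} with no written proof at all, as an immediate translation of the preceding pretzel theorem, and the dictionary you set up is the right one. A strand with $v_i$ interior vertices carries $v_i+1$ edges, hence corresponds to a twist region with $|p_i|=v_i+1$ crossings, with the label $A$ or $B$ recording the sign of $p_i$ and bipartiteness of the theta graph recording orientability. (Your dictionary in fact corrects the paper's own off-by-one phrase ``$p_i$ vertices \dots $p_i$ crossings''; it is forced by clause (1), since strands with $0$ or $2$ vertices must correspond to $p_i=\pm1$ or $\mp3$.)

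The genuine gap is precisely the step you flag and then defer: the parity reconciliation is not ``routine substitution,'' and it cannot be completed in the way you expect. Translating theorem case (2) ($n$ odd) faithfully gives an \emph{odd} number of strands, in which the $n-1$ one-vertex strands run through an even number of alternating labels, so the two cyclic neighbours of the free strand carry \emph{different} labels; clause (2) of the corollary instead asserts an \emph{even} number of strands, which forces those two neighbours to carry the \emph{same} label. The symmetries you invoke --- cyclic rotation, strand reversal, and the global mirror --- all preserve the parity of the number of strands and this adjacency pattern, so they cannot absorb the discrepancy; the same parity swap occurs between theorem case (3) and clause (3). The mismatch is real, not notational: the four-strand graph with one vertex per strand and labels $A,B,A,A$ satisfies clause (2) as printed, but it corresponds to the pretzel link determined by $(2,-2,2,2)$, which lies in none of the theorem's three families (one can also check via \refprop{StallingsStar} that the abelianized Stallings map has determinant $\pm 2$), so its checkerboard surface is not a fiber; conversely, the three-strand graph with labels $A,B,A$ corresponds to $(2,-2,2)$, which is in family (2) of the theorem and hence is a fiber, yet satisfies none of the corollary's clauses. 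Simplest of all, two strands with one vertex each, both labeled $A$, satisfies clause (2) literally but is a four-edge all-$A$ cycle, which \reflem{Cycle} says is not a fiber. In short, a faithful execution of your plan yields the corollary with ``even'' and ``odd'' interchanged in clauses (2) and (3) --- i.e., it reveals that the printed statement itself is misstated --- whereas your proposal asserts, incorrectly, that the symmetries of the pretzel will reconcile the two; as written, the proof cannot close at exactly the point you identified as the main obstacle.
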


\subsection{Checkerboard surfaces of two-bridge links}

We conclude with one final family of reduced state graphs whose fibering we can completely classify: those that arise from diagrams of 2-bridge knots. Essential surfaces in 2-bridge knots were classified by Hatcher and Thurston~\cite{HatcherThurston}, and they note in that paper that those spanning surfaces that are fibers are exactly those that embed in a diagram $K[a_{n-1}, \dots, a_1]$ with each $a_j=\pm 2$ (notation described below). However, the surfaces in \cite{HatcherThurston} are not isotoped to be checkerboard surfaces. While essential checkerboard surfaces must appear in their theorem, a nontrivial isotopy of the entire link is required move a standard checkerboard surface into one of the forms of their results. Since state graphs induce diagrams and checkerboard surfaces, we are interested in classifying those that lead to fibered surfaces without having to isotope into a form required by~\cite{HatcherThurston}.

In this section, we classify all state graphs corresponding to 2-bridge knots and links that induce checkerboard surfaces that are fibers. Note that while the surfaces must be isotopic to one in $K[\pm 2, \dots, \pm 2]$ as in~\cite{HatcherThurston}, the diagrams we obtain here are much broader.

We begin by recalling results and notation. Recall that \emph{2-bridge knot} or link is obtained by taking the closure of a rational tangle, determined by a rational number $p/q$; see for example \cite{Murasugi:Book}. For any continued fraction
\[ \frac{p}{q} = [a_n, a_{n-1}, \hdots, a_1] =  a_n + \cfrac{1}{a_{n-1} + \cfrac{1}{\ddots \, + \cfrac{1}{a_1}}} \]
we obtain a diagram of the knot with $(n-1)$ twist regions, with $|a_i|$ crossings in the $i$-th twist region, and with signs of the crossings equal to the sign of $a_i$ if $i$ is even, and $-a_i$ if $i$ is odd. Denote this diagram by $K[a_{n-1}, \dots, a_1]$. See \reffig{2BridgeDiagram}. Note that the value of $a_n$ does not affect the knot, and it can be removed from the diagram, so we omit it from the notation. Note also that we may assume $|a_i|\geq 0$, since diagrams with $0$ crossings in a twist region may be described instead by fewer twist regions, with crossings above and below merged. Similarly we may assume $|a_1|\geq 2$ and $|a_{n-1}|\geq 2$. 

\begin{figure}
  \import{figures/}{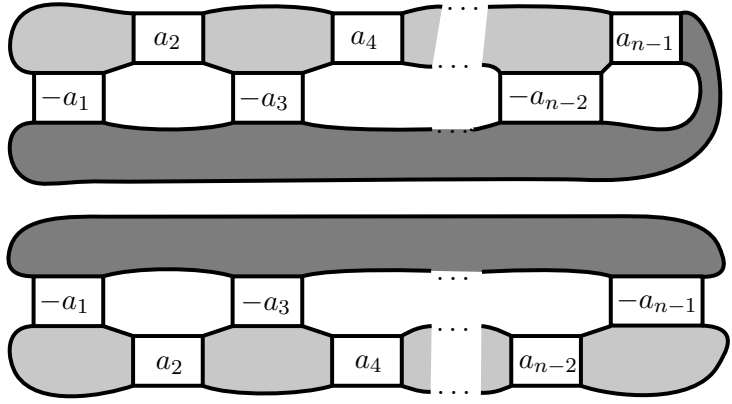}
  \caption{The diagram of $K[a_{n-1}, \dots, a_1]$. Top: $n$ odd; bottom: $n$ even. Box labeled $\pm a_i$ denotes a (horizontal) twist region with $|a_i|$ crossings, with sign of the crossings equal to that of $\pm a_i$.}
  \label{Fig:2BridgeDiagram}
\end{figure}

Associated with each such diagram are two checkerboard surfaces. We will be interested in the bounded checkerboard surface, as shown in \reffig{2BridgeDiagram}.

Now consider the state graph of this checkerboard surface. There will be one vertex corresponding to the large disk in the diagram, which we will label $+$ and denote as the basepoint. For each odd $a_{2j+1}$, the basepoint will be adjacent to $b_{2j+1}=|a_{2j+1}|$ parallel multi-edges, with each edge labeled $A$ if $a_{2j+1}>0$ and $B$ otherwise. The opposite endpoint of the $b_{2j+1}$ edges will be connected to the endpoint of the $b_{2j-1}$ edges via a strand of $b_{2j} = |a_{2j}|$ consecutive edges, all labeled $A$ if $a_{2j}>0$ and $B$ otherwise.
If $n=2k+3$ is odd, the basepoint is also connected to the other endpoint of the $b_{2k+1}$ multi-edges by a sequence of $|a_{2k+2}|:= b_{2k+2}$ consecutive edges, labeled $A$ if $a_{2k+1}>0$ and $B$ otherwise. See \reffig{2BridgeStateGraph}, left. The reduced state graph replaces each group of $b_{2j-1}$ parallel multi-edges with a single edge, denoted $e_j$. See \reffig{2BridgeStateGraph}, right. By \reflem{MultipleEdges}, the checkerboard surface is a fiber if and only if the induced checkerboard surface of the reduced graph is a fiber. 

\begin{figure}
  \import{figures/}{2BridgeStateGraph.pdf_tex}
  \caption{Left: the state graph of $K[a_{n-1}, \dots, a_1]$ when $n$ is even (left) and odd (right). Right: the corresponding reduced state graphs.}
  \label{Fig:2BridgeStateGraph}
\end{figure}

Since we only consider orientable surfaces, we must have $b_{2j}$ even, for $j=1, \dots, k$. If $n$ is odd, then $b_{2k+2}$ must also be odd, else we cannot label the vertices $+$ and $-$ in an alternating manner. Replace the $b_{2k+2}$ edges in this case by one edge $e_{k+2}$, and by $b_{2k+2}-1$ additional edges with the same label, where $b_{2k+2}-1$ is even. Then the form of the reduced graph in the even and odd cases agree, so we can apply a single argument to both. Namely, there are regions  $R_0$ (unbounded), $R_1, \dots, R_k$ (or $R_{k+1}$), with $R_j$ bordered by edges $e_j$, $e_{j+1}$, and a collection of $b_{2j}$ consecutive edges, which we will denote by the strand $\epsilon_j$.

We may assume that the edge $e_1$ in $G'_{\sigma}$ has the same label as the other $b_2$ edges in the boundary of $R_1$, or else we may remove two crossings, as in \reflem{Consecutive}. Similarly, in the case $n=2k+3$ is odd, the edge $e_{k+1}$ has the same label as the other $b_{2k}$ edges in the region $R_k$.

We now consider generators of the fundamental group $\pi_1(G'_{\sigma})$.
First we set as basepoint the vertex $x$ on the left of the reduced graph $G'_{\sigma}$. The generators of $\pi_1(G'_{\sigma})$ are $\{\gamma_1,\dots, \gamma_k\}$ (or $\{\gamma_1, \dots, \gamma_{k+1}\}$), where $\gamma_j$ corresponds to the oriented boundary of the region $R_j$, oriented counterclockwise and based at $x$. Thus $\gamma_j$ runs from $x$ along $e_{j+1}$, then along $\epsilon_j$ from bottom to top, and then along $e_j$ back to $x$. As in \refsec{Stallings}, let $\{u_1, \dots, u_k\}$ (or $\{u_1, \dots, u_{k+1}\}$) denote the generators of $\pi_1(S^3-G'_\sigma)$.

\begin{lemma}\label{Lem:StallingsMap2Bridge}
The action of the Stallings map $\phi'\from\pi_1(G'_\sigma)\to\pi_2(S^3-G'_\sigma)$ is as follows. For any generator $\gamma_j$, described as above, $\phi'(\gamma_j)$ is the word $w_{j1}w_{j2}w_{j3}$ where
\[ w_{j1} = \begin{cases} u_j & \mbox{if $e_{j+1}$ is labeled $A$}\\
  u_{j+1} & \mbox{if $e_{j+1}$ is labeled $B$},
  \end{cases}
\]
\[ w_{j2} = \begin{cases} u_j^{b_{2j}/2} & \mbox{if edges of $\epsilon_j$ are labeled $A$} \\
  u_j^{-b_{2j}/2} & \mbox{if edges of $\epsilon_j$ are labeled $B$},
  \end{cases}
\]
\[ w_{j3} = \begin{cases} u_{j-1}^{-1} & \mbox{if $e_j$ is labeled $A$}\\
  u_j^{-1} & \mbox{if $e_j$ is labeled $B$}.
  \end{cases}
  \]
Here $u_0$ and $u_{k+1}$ (odd case) or $u_{k+2}$ (even case) are understood to be the identity.   
\end{lemma}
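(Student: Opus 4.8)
The plan is to read off $\phi'(\gamma_j)$ directly from the recipe of \refprop{StallingsStar}, applied to the explicit planar structure of the reduced graph $G'_\sigma$. Recall that $G'_\sigma$ is a ``fan'': the basepoint $x$ (labeled $+$) is joined by the single edges $e_1, \dots, e_{k+1}$ to vertices $v_1, \dots, v_{k+1}$; consecutive vertices $v_j, v_{j+1}$ are joined by the strand $\epsilon_j$ of $b_{2j}$ consecutive edges; the bounded region $R_j$ is the one enclosed by $e_j$, $\epsilon_j$, and $e_{j+1}$; and the unbounded region $R_0$ lies on the far side of all the strands. This graph is planar with no cut-vertices, so \refprop{StallingsStar} and \refdef{Labeling} apply. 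First I would fix the vertex labeling: since each $e_j$ is a single edge, every $v_j$ is labeled $-$, and along each strand the labels alternate; because $b_{2j}$ is even this is consistent with $v_{j+1}$ again being labeled $-$. I would then split the loop $\gamma_j$ into its three arcs --- along $e_{j+1}$ from $x$ to $v_{j+1}$, along $\epsilon_j$ from $v_{j+1}$ to $v_j$, and along $e_j$ from $v_j$ back to $x$ --- which contribute the three syllables $w_{j1}, w_{j2}, w_{j3}$ respectively.

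For $w_{j1}$ and $w_{j3}$ the computation is immediate from \refprop{StallingsStar} together with \refdef{Labeling}. Orienting $\gamma_j$ counterclockwise keeps $R_j$ on its left throughout. The arc along $e_{j+1}$ runs from $+$ to $-$ with $R_j$ on the left and $R_{j+1}$ on the right, so \refdef{Labeling} assigns $e_{j+1}$ the letter $u_j$ if it is labeled $A$ and $u_{j+1}$ if it is labeled $B$; as the arc agrees with the $+\to-$ direction of the edge, we write this letter directly, giving $w_{j1}$. The arc along $e_j$ runs from $-$ to $+$, while under the $+\to-$ orientation of $e_j$ the region $R_{j-1}$ lies to the left and $R_j$ to the right; thus the letter assigned to $e_j$ is $u_{j-1}$ (label $A$) or $u_j$ (label $B$), and since $\gamma_j$ traverses it against the $+\to-$ direction we record its inverse, giving $w_{j3}$. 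When a bordering region is the unbounded region $R_0$ --- which happens exactly for the first and last edges of the fan --- \refdef{Labeling} records no letter, which is precisely what the conventions $u_0 = u_{k+1} = 1$ (or $u_{k+2}=1$) encode.

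The substantive step is $w_{j2}$, the contribution of the strand $\epsilon_j$, and this is where I expect the only real bookkeeping. Here one side of $\epsilon_j$ is $R_j$ and the other is the unbounded region $R_0$, and all edges of $\epsilon_j$ carry a single label, say all $A$ (the all-$B$ case being symmetric). Two alternations interact. First, because the vertices along $\epsilon_j$ alternate $\pm$, the $+\to-$ orientations of consecutive edges point in opposite directions along the strand; consequently, under \refdef{Labeling}, consecutive edges have $R_j$ and $R_0$ on their left in alternation, so exactly half of the edges receive the label $u_j$ while the other half border $R_0$ on the relevant side and receive the trivial label. Second, as $\gamma_j$ traverses the strand it crosses these same edges alternately with and against their $+\to-$ orientation. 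Matching the two alternations (using the fixed convention that $R_j$ lies to the left of $\gamma_j$ and that $\gamma_j$ runs from $v_{j+1}$ to $v_j$, i.e.\ ``bottom to top'') shows that every nontrivially-labeled edge is traversed so as to contribute the same letter $u_j$, while the trivially-labeled edges contribute nothing. Since $\epsilon_j$ has $b_{2j}$ edges, exactly $b_{2j}/2$ contribute, yielding $w_{j2}=u_j^{b_{2j}/2}$; replacing all labels by $B$ flips both which half contributes and the sign of traversal, giving $u_j^{-b_{2j}/2}$. Concatenating $w_{j1}w_{j2}w_{j3}$ then completes the proof. The main obstacle is purely orientational: one must pin down the counterclockwise convention and the ``bottom to top'' direction of the strand consistently enough that the two independent alternations line up to produce a single power of $u_j$, rather than a cancelling alternating word.
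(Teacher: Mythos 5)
Your proposal is correct and follows exactly the paper's route: the paper's entire proof of \reflem{StallingsMap2Bridge} is the single sentence that one applies \refprop{StallingsStar} to each generator $\gamma_j$, which is precisely what you do, with the edge-by-edge bookkeeping (vertex signs, the left/right conventions of \refdef{Labeling}, and the two interacting alternations along each strand $\epsilon_j$) spelled out rather than left implicit. Your careful treatment of the strand contribution $w_{j2}$ and of the unbounded-region conventions $u_0=u_{k+1}=1$ fills in details the paper omits, but it is the same argument.
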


\begin{proof}
The proof is by applying \refprop{StallingsStar} to each $\gamma_j$. 
\end{proof}

To simplify, abelianize and consider the effect of the Stallings map on homology. 

\begin{lemma}\label{Lem:2BridgeMatrix}
The the matrix $M$ induced by the Stallings map $\phi'$ on homology is tridiagonal, i.e.\ it has the form 
\[
M = 
 \begin{pmatrix}
  p_1 & q_1 &   &  &\\
  r_1 & p_2 & q_2  &  &0 & \\
   & r_2 & p_3  &  &  \\
  &  &  & \ddots &  &  \\
  &0  &  &  &  p_{m-1}& q_{m-1} \\
  &  &  &  & r_{m-1} & p_m \\
 \end{pmatrix}
\]
and its determinant is given by 
\[ \det(M)=\prod_{i=1}^{m}p_i. \]
\end{lemma}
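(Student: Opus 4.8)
The plan is to read the matrix $M$ directly off \reflem{StallingsMap2Bridge} and then exploit a structural coincidence between its two off-diagonal bands. Order the bases as $\{[\gamma_1], \dots, [\gamma_m]\}$ for $H_1(G'_\sigma)$ and $\{[u_1], \dots, [u_m]\}$ for $H_1(S^3-G'_\sigma)$, so that the $j$-th row of $M$ records the exponent sums of $u_1, \dots, u_m$ in the abelianization of the word $w_{j1}w_{j2}w_{j3}$. First I would observe that, by the explicit description in \reflem{StallingsMap2Bridge}, each syllable $w_{j1}$, $w_{j2}$, $w_{j3}$ is a power of $u_{j-1}$, $u_j$, or $u_{j+1}$ alone; the boundary conventions $u_0 = u_{k+1} = \mathrm{id}$ (resp.\ $u_{k+2} = \mathrm{id}$) ensure nothing spills past the first or last generator. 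Hence the only nonzero entries of row $j$ lie in columns $j-1$, $j$, and $j+1$, which is exactly the tridiagonal form claimed.

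Next I would compute the two off-diagonal bands and make the key observation. The superdiagonal entry $q_j$ is the coefficient of $u_{j+1}$ in $\phi'(\gamma_j)$; inspecting the three syllables, this is contributed only by $w_{j1}$, and equals $1$ precisely when $e_{j+1}$ is labeled $B$ (and $0$ otherwise). The subdiagonal entry $r_j$ is the coefficient of $u_j$ in $\phi'(\gamma_{j+1})$; this is contributed only by $w_{j+1,3}$, and equals $-1$ precisely when $e_{j+1}$ is labeled $A$. The crucial point is that both $q_j$ and $r_j$ are governed by the label of the single edge $e_{j+1}$: since that edge carries exactly one of the labels $A$ or $B$, at most one of $q_j, r_j$ is nonzero, and therefore $q_j r_j = 0$ for every $j$.

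Finally I would invoke the standard continuant expansion of a tridiagonal determinant. Writing $D_i$ for the determinant of the leading $i\times i$ block of $M$, one has the recurrence $D_i = p_i D_{i-1} - q_{i-1} r_{i-1} D_{i-2}$, with $D_0 = 1$ and $D_1 = p_1$. Because $q_{i-1} r_{i-1} = 0$ by the previous paragraph, the recurrence collapses to $D_i = p_i D_{i-1}$, and a trivial induction yields $\det(M) = D_m = \prod_{i=1}^m p_i$.

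The computation is entirely mechanical once this structure is seen, so there is no deep obstacle; the single place demanding care is the bookkeeping of indices. I expect the main subtlety to be confirming that the two entries paired in the continuant, namely $q_{i-1}$ (which arises from $\gamma_{i-1}$) and $r_{i-1}$ (which arises from $\gamma_i$), are indeed both controlled by one and the same edge $e_i$, together with checking that the identity conventions $u_0, u_{k+1}, u_{k+2} = \mathrm{id}$ correctly suppress the corner off-diagonal terms so that the base cases $D_0, D_1$ of the recurrence are clean.
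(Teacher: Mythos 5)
Your proof is correct and takes essentially the same approach as the paper: tridiagonality is read off from \reflem{StallingsMap2Bridge} (each $\phi'(\gamma_j)$ involves only $u_{j-1}$, $u_j$, $u_{j+1}$), and your key observation that $q_j$ and $r_j$ are both controlled by the label of the single shared edge $e_{j+1}$, forcing $q_jr_j=0$, is precisely the paper's remark that $R_j$ shares a single edge with each neighboring region, so $|q_j|+|r_j|=1$. The only difference is that you make explicit the continuant recurrence $D_i=p_iD_{i-1}-q_{i-1}r_{i-1}D_{i-2}$, which the paper leaves implicit in its final ``thus.''
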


Notation in the lemma: $m=k$ when $n=2k+2$ is even, and $m=k+1$ when $n=2k+3$ is odd. 

\begin{proof}
The fact that the induced map on homology is tridiagonal follows immediately from \reflem{StallingsMap2Bridge}: each word $\phi(\gamma_j)$ involves at most the generators $u_{j-1}$, $u_j$, and $u_{j+1}$.

The terms of the matrix are integers that encode the powers of the generators $u_i$. Note that since the region $R_j$ shares a single edge with each of its neighboring regions, it must be the case that $|q_j|+|r_j|=1$. Thus the determinant of $M$ is $\prod_{i=1}^n p_i$.   
\end{proof}

\begin{lemma}\label{Lem:2BridgeFiber}
  Let $K[a_n, a_{n-1}, \dots, a_1]$ be a 2-bridge link with bounded checkerboard surface $S_\sigma$. Let $M$ denote the matrix of \reflem{2BridgeMatrix}. Then $S_\sigma$ is a fiber if and only if $|\det(M)|=1$.
\end{lemma}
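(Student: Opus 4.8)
The plan is to reduce the statement to an algebraic assertion about the Stallings map and then exploit the very rigid form of the words computed in \reflem{StallingsMap2Bridge}. By \refthm{Algebraic} (equivalently \refcor{AlgebraicFibering}), the surface $S_\sigma$ is a fiber if and only if the Stallings map $\phi'\from \pi_1(G'_\sigma)\to\pi_1(S^3-G'_\sigma)$ is an isomorphism. Both groups are free of the same rank $m$, so since finitely generated free groups are Hopfian, $\phi'$ is an isomorphism if and only if it is surjective. Thus the entire lemma comes down to characterizing surjectivity of $\phi'$ in terms of $\det(M)$.

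One direction is immediate: if $\phi'$ is an isomorphism, then the induced map on first homology $\ZZ^m\to\ZZ^m$, whose matrix is $M$, is also an isomorphism, so $M\in GL_m(\ZZ)$ and $|\det(M)|=1$. For the converse I would first observe, using \reflem{2BridgeMatrix}, that $|\det(M)| = \prod_i |p_i| = 1$ forces $|p_i|=1$ for every $i$, since the $p_i$ are integers. Feeding $|p_j|=1$ back into the three cases of \reflem{StallingsMap2Bridge} shows that each generator image has the normal form
\[ \phi'(\gamma_j) = u_{j+1}^{\epsilon_j}\, u_j^{\pm 1}\, u_{j-1}^{-\delta_j}, \]
where $\epsilon_j,\delta_j\in\{0,1\}$ record whether $e_{j+1}$ is labeled $B$ and whether $e_j$ is labeled $A$, with the boundary conventions $u_0 = u_{m+1} = 1$. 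The crucial structural input is the relation $|q_j|+|r_j|=1$ from \reflem{2BridgeMatrix}: it says that for each adjacent pair of regions exactly one of $\epsilon_j$, $\delta_{j+1}$ equals $1$, so either $u_{j+1}$ is coupled into $\phi'(\gamma_j)$ or $u_j$ is coupled into $\phi'(\gamma_{j+1})$, but never both.

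With this in hand I would prove surjectivity by a direct elimination. To recover $u_j$ from $\phi'(\gamma_j)$ one needs the neighboring generators that actually occur, namely $u_{j+1}$ (when $\epsilon_j=1$) and $u_{j-1}$ (when $\delta_j=1$). Form the dependency digraph on $\{1,\dots,m\}$ with a directed edge $j\to j+1$ when $\epsilon_j=1$ and $j+1\to j$ when $\delta_{j+1}=1$. By the coupling relation each edge of the underlying path carries at most one orientation, and since a path is a tree, this digraph is acyclic. Solving the $u_j$ in an order in which each generator is handled after those it depends on, at each step $u_j$ appears in $\phi'(\gamma_j)$ with exponent $\pm 1$ and its relevant neighbors are already known, so
\[ u_j = \bigl(u_{j+1}^{-\epsilon_j}\,\phi'(\gamma_j)\,u_{j-1}^{\delta_j}\bigr)^{\pm 1} \]
lies in the image of $\phi'$. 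Hence every $u_j$ is in the image, $\phi'$ is surjective, and therefore an isomorphism, so $S_\sigma$ is a fiber.

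The step I expect to be the main obstacle is precisely this converse direction, because surjectivity on homology does not in general imply surjectivity of a homomorphism of free groups. The content that rescues the argument is the one-sided coupling $|q_j|+|r_j|=1$, which linearizes the dependencies into an acyclic chain and lets one literally solve for the generators one at a time. The remaining care is bookkeeping: verifying that the boundary conventions $u_0=u_{m+1}=1$ and the two parity cases ($n$ even versus $n$ odd, with $m=k$ or $m=k+1$) are absorbed uniformly into the normal form above.
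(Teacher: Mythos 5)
Your proposal is correct, but your converse direction runs along a genuinely different track from the paper's. Both arguments begin the same way: fibering is equivalent to the Stallings map $\phi'$ being an isomorphism (you should also invoke \reflem{MultipleEdges} here, as the paper does, to justify passing from $S_\sigma$ to the reduced graph $G'_\sigma$ before applying \refthm{Algebraic}), the forward direction is abelianization, and $|\det(M)|=1$ forces every $|p_j|=1$. From there the paper argues geometrically: $|p_1|=1$ rules out two of the four label configurations at $R_1$ and forces $b_2=2$ in the surviving ones, which permits splitting off a Hopf band as a Murasugi summand (\reffig{2BridgeDecompose}); Gabai's theorem (\refthm{Gabai}) and an induction over the remaining regions then give fibering. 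You instead stay entirely algebraic: $|p_j|=1$ collapses each word of \reflem{StallingsMap2Bridge} to the free-group normal form $u_{j+1}^{\epsilon_j}u_j^{\pm 1}u_{j-1}^{-\delta_j}$ -- a legitimate statement about the actual words, not just their homology classes, because the $u_j$-letters are contiguous in $w_{j1}w_{j2}w_{j3}$ -- and the one-sided coupling behind $|q_j|+|r_j|=1$ makes your dependency digraph an orientation of a path, hence acyclic, so the $u_j$ can be solved for one at a time; surjectivity plus the Hopfian property of free groups finishes. As for what each approach buys: the paper's route exhibits the fiber concretely as an iterated plumbing of Hopf bands, and its label-by-label case analysis is reused almost verbatim in the proof of \refthm{2Bridge}, but its inductive step ("we proceed inductively, as before") is stated quite informally; your route avoids Gabai entirely, is more self-contained and rigorous at exactly that point, and is in effect a hands-on instance of the folding criterion of \refcor{StallingsFold}, since your elimination order is precisely a sequence of Stallings folds collapsing the labeled graph to a rose.
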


\begin{proof}
By \refthm{Algebraic} and \reflem{MultipleEdges}, $S_\sigma$ is a fiber if and only if the Stallings map $\phi'\from \pi_1(G'_\sigma)\to\pi_1(S^3-G'_\sigma)$ is an isomorphism.

If $\phi'$ is an isomorphism, then it must be the case that $\det(M)=\pm 1$. 

We show the converse: if $\det(M)=\pm 1$ then $\phi'$ is an isomorphism.  In fact, if $\det(M)=\pm 1$, then $|p_i|=1$, for $i=1,\dots, n$. We proceed by induction on the regions $R_1,\dots, R_m$.

Recall that the $b_i$ are even, positive integers.

\textbf{Case I:} If $e_2$ is labeled $A$, there are two possibilities for $\phi'(\gamma_1)$:
\begin{enumerate}
\item[(a)] $\gamma_1 \mapsto u_1u_1^{b_2/2}=u_1^{b_2/2+1}$, if $e_1$ and the other edges of $R_1$ are labeled $A$, or

\item[(b)] $\gamma_1 \mapsto u_1u_1^{-b_2/2-1}=u_1^{-b_2/2}$, if $e_1$ and the other edges of $R_1$ are labeled $B$. 
\end{enumerate}

Since $|p_1|=1$, if (a) happens, we must have $b_2=0$, which we ruled out by our choices. If (b) happens, we must have $b_2=2$. In this case we are able to decompose a Hopf band. The decomposition of case (b) is illustrated in \reffig{2BridgeDecompose} (top).

\begin{figure}
\import{figures/}{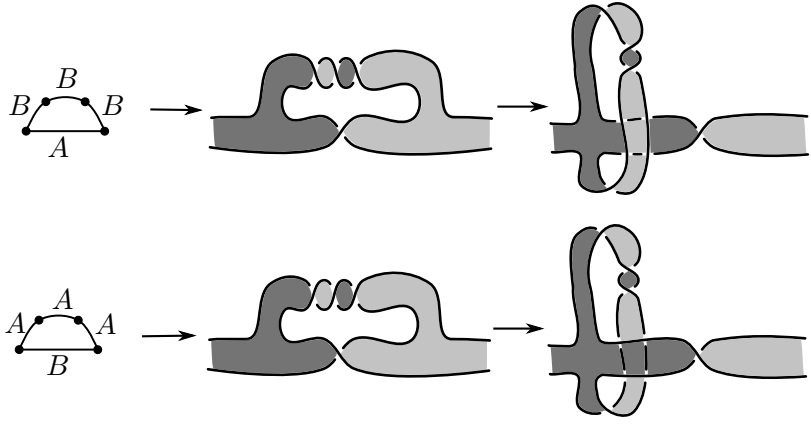}
\caption{Top: Part (b) of Case I; Bottom: Part (a) of Case II.}
\label{Fig:2BridgeDecompose}
\end{figure}

\textbf{Case II:} If $e_2$ is labeled $B$, again there two possibilities for $\phi'(\gamma_1)$:
\begin{enumerate}
\item[(a)] $\gamma_1\mapsto u_2u_1^{b_2/2}$, if $e_1$ and the other edges of $R_1$ are labeled $A$,
\item[(b)] $\gamma_1\mapsto u_2u_1^{-b_2/2-1}$, if $e_1$ and the other edges of $R_1$ are labeled $B$.
\end{enumerate}

The argument is similar to the previous case: Since $|p_1|=1$, if (b) happens we must have $b_2=0$, which is impossible. If (a) happens we must have $b_2=2$, and we decompose a Hopf band. In (a) this decomposition is illustrated in \reffig{2BridgeDecompose} (bottom).

After these decompositions we are left with a graph consisting of the regions $R_2,\dots, R_n$. We proceed inductively, as before, in the edges of the boundary of each region. In each step we use the fact that $|p_i|=1$ to obtain the desired decompositions. We conclude that if $|\det(M)|=1$ then the surface $S_{\sigma}$ must be a fiber.
\end{proof}

\begin{theorem}\label{Thm:2Bridge}
Suppose $K$ is a 2-bridge link with diagram $K[a_{n-1}, \dots, a_1]$, where $|a_{n-1}|,|a_1|\geq 2$, and $|a_j|\geq 1$ for all $j$. Then the associated bounded checkerboard surface is a fiber if and only if the $a_i$ satisfy the following.
\begin{enumerate}
\item For odd indices $2j+1\neq n-2$, $a_{2j+1}$ can be any nonzero integer. 
\item For even indices $2j$ with $2j<n-1$:
  \begin{itemize}
  \item If $a_{2j-1}>0$ and $a_{2j+1}>0$, then $a_{2j}=-4$.
  \item If $a_{2j-1}>0$ and $a_{2j+1}<0$, then $a_{2j} = \pm 2$.
  \item If $a_{2j-1}<0$ and $a_{2j+1}>0$, then $a_{2j} = \pm 2$.
  \item If $a_{2j-1}<0$ and $a_{2j+1}<0$, then $a_{2j} = 4$.
  \end{itemize}
\item If $(n-1)=2k+2$ is even, then either $a_{2k+1}>0$ and $a_{2k+2}=-3$, or $a_{2k+1}<0$ and $a_{2k+2}=3$. 
\end{enumerate}
\end{theorem}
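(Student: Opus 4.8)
My plan is to reduce the entire statement to a finite list of elementary integer equations, one per region, using the machinery already assembled. By \reflem{2BridgeFiber}, the bounded checkerboard surface $S_\sigma$ is a fiber if and only if $|\det(M)| = 1$, and by \reflem{2BridgeMatrix} the tridiagonal matrix $M$ satisfies $\det(M) = \prod_i p_i$ with every $p_i \in \ZZ$. Since a product of integers has absolute value $1$ precisely when each factor does, fibering is equivalent to requiring $|p_j| = 1$ simultaneously for all $j$. This single equivalence delivers both directions of the ``if and only if'' at once, so the whole proof becomes a matter of computing each diagonal entry $p_j$ from \reflem{StallingsMap2Bridge} and then solving $|p_j| = 1$. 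Throughout I use the dictionary established before \reflem{StallingsMap2Bridge}: the spoke $e_j$ comes from $a_{2j-1}$, the spoke $e_{j+1}$ from $a_{2j+1}$, and the strand $\epsilon_j$ from $a_{2j}$, with an edge labeled $A$ exactly when the corresponding $a_i$ is positive.

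First I would treat a generic interior region $R_j$. Reading off the exponent of $u_j$ in $\phi'(\gamma_j) = w_{j1}w_{j2}w_{j3}$ via \reflem{StallingsMap2Bridge}, the syllable $w_{j1}$ contributes $+1$ exactly when $e_{j+1}$ is labeled $A$, the syllable $w_{j2}$ contributes $a_{2j}/2$, and the syllable $w_{j3}$ contributes $-1$ exactly when $e_j$ is labeled $B$. In the notation above this is $p_j = a_{2j}/2 + \delta^+_j - \delta^-_j$, where $\delta^+_j = 1$ if $a_{2j+1} > 0$ and $0$ if $a_{2j+1} < 0$, and $\delta^-_j = 1$ if $a_{2j-1} < 0$ and $0$ if $a_{2j-1} > 0$. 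Solving $|p_j| = 1$ in each of the four sign patterns of $(a_{2j-1}, a_{2j+1})$, and remembering that $a_{2j}$ is a nonzero even integer, reproduces exactly the four bullet points of part (2). Because only the signs of the odd-indexed entries enter $p_j$ --- their magnitudes having already been absorbed into the single reduced spokes by the Hopf-band reductions of \reflem{MultipleEdges} --- the odd parameters are otherwise free, which is part (1).

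Next I would verify that no boundary correction is needed at the two ``ends'' that share the diagonal formula. For $R_1$ the syllable $w_{13}$ involves $u_0$, which is the identity; but this only deletes an off-diagonal entry and leaves the $u_1$-exponent unchanged, so $p_1$ obeys the same formula. The same happens at the final region when $n = 2k+2$ is even, where $u_{k+1}$ is the identity. Hence in the even case every region is governed by part (2) and every odd parameter by part (1), and part (3) is vacuous --- consistent with its hypothesis that $(n-1)$ be even, i.e.\ that $n$ be odd.

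The one place a genuinely different computation is forced, and hence the main obstacle, is the last region $R_{k+1}$ in the odd case $n = 2k+3$. Here the final strand carries an \emph{odd} number $b_{2k+2} = |a_{2k+2}|$ of edges, so it must first be split into the reduced spoke $e_{k+2}$ and the strand $\epsilon_{k+1}$ of $b_{2k+2}-1$ edges, with $u_{k+2}$ the identity. Carrying out the same exponent computation for $\phi'(\gamma_{k+1})$, and carefully tracking the labels of $e_{k+1}$, $e_{k+2}$, and $\epsilon_{k+1}$ together with the sign patterns of $(a_{2k+1}, a_{2k+2})$, produces $p_{k+1}$ equal to $\pm(b_{2k+2}-1)/2$ or $\pm(b_{2k+2}+1)/2$. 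Imposing $|p_{k+1}| = 1$ together with the standing hypothesis $|a_{n-1}| = b_{2k+2} \geq 2$, which rules out $b_{2k+2} = 1$, forces $b_{2k+2} = 3$ and selects exactly the two admissible sign patterns, giving part (3); the remaining two patterns yield $|p_{k+1}| \geq 2$. This also explains why the odd index $2k+1 = n-2$ is excluded from part (1): its sign is pinned down by part (3) rather than free. Assembling the interior count, the benign endpoints, and this final region completes the classification.
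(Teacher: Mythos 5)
Your proposal is correct and takes essentially the same route as the paper's proof: both reduce fibering to $|\det(M)|=1$ via \reflem{2BridgeFiber}, use $\det(M)=\prod_i p_i$ from \reflem{2BridgeMatrix} to conclude that fibering is equivalent to $|p_j|=1$ for every $j$, and then compute each diagonal entry from \reflem{StallingsMap2Bridge} and solve the resulting integer equations sign-case by sign-case, with the final region in the odd case ($n=2k+3$) treated separately to yield part (3). Your unified formula $p_j = a_{2j}/2 + \delta^+_j - \delta^-_j$ is just a compact repackaging of the paper's explicit eight-case check, so there is nothing substantively different to flag.
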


\begin{proof}
By \reflem{2BridgeFiber}, it suffices to find all choices of the $a_k$ for which the tridiagonal matrix of \reflem{2BridgeMatrix} has determinant $\pm 1$. To do so, we consider the images of the generators $\gamma_j$ as in \reflem{StallingsMap2Bridge}. For each generator, aside from the first and the last, there are eight choices of labels $A$ and $B$ for the edges meeting $\gamma_j$, each determining an integer $p_j$. When we set the $p_j$ to be $\pm 1$ we obtain the result.

For example, when $e_j$, $\epsilon_j$ and $e_{j+1}$ are all of type $A$, meaning $a_{2j-1}$, $a_{2j}$, and $a_{2j+1}$ are positive, we obtain $p_j = b_{2j}/2+1$. If this is $+1$, then $b_{2j} = |a_{2j}|$ must be zero, which is ruled out by our assumptions on $a_{2j}$. If it is $-1$, then $b_{2j}=|a_{2j}| = -4$, which is impossible. So $a_{2j-1}$, $a_{2j}$, and $a_{2j+1}$ cannot all be positive. If $a_{2j-1}$ and $a_{2j+1}$ are positive (type $A$) and $a_{2j}$ is negative (type $B$), then $p_j = -b_{2j}/2+1$. This cannot be $+1$ by our assumption that $b_{2j}=|a_{2j}|>0$, hence it equals $-1$ and $b_{2j} = 4$. The other cases are dealt with similarly.

Finally, when $(n-1)=2k+2$ is even, the value of $p_{k+1}$ depends on the signs of $a_{2k+1}$ and $a_{2k+2}$. When they are both positive or both negative, we find that $p_{k+1} = \pm ((b_{2k+2}-1)/2+1)$, which is $\pm 1$ only if $b_{2k+2}=|a_{2k+2}| = 1$. This is ruled out by our assumption that $|a_{n-1|}=|a_{2k+2}|\geq 2$. When $a_{2k+1}>0$ and $a_{2k+2}<0$, $p_{k+1} = -(b_{2k+2}-1)/2$, which is $-1$ when $b_{2k+2} =|a_{2k+2}|=3$. Similarly for $a_{2k+1}<0$ and $a_{2k+2}>0$. 
\end{proof}

\bibliographystyle{amsplain}
\bibliography{references}

\end{document}